\newtheorem{lemma}{Lemma}[section]
\newtheorem{theorem}[lemma]{Theorem}
\newtheorem{corollary}[lemma]{Corollary}
\newtheorem{remark}[lemma]{Remark}
\begin{document}

\title[Fridman Function, Injectivity Radius Function and Squeezing Function]{Fridman Function, Injectivity Radius Function and Squeezing Function}

\author{Tuen Wai Ng }
\address{The University of Hong Kong, Pokfulam, Hong kong SAR, PRC}
\email{ntw@maths.hku.hk}

\author{Chiu Chak Tang}
\address{The University of Hong Kong, Pokfulam, Hong kong SAR, PRC}
\email{chiuchakTang@gmail.com}

\author{Jonathan Tsai}
\address{The University of Hong Kong, Pokfulam, Hong kong SAR, PRC}
\email{jonathan.tsai@cantab.net}

\date{\today}

\keywords{Fridman functions
\and
Squeezing functions
}

\subjclass[2010]{ 30c35 \and 32F45 \and 32H02}

\maketitle

\begin{abstract}
Very recently, the Fridman function of a complex manifold $X$ has been identified as a dual of the squeezing function of $X$. In this paper, we prove that the Fridman function for certain hyperbolic complex manifold $X$ is bounded above by the injectivity radius function of $X$. This result also suggests us to use the Fridman function to extend the definition of uniform thickness to higher-dimensional hyperbolic complex manifolds. We also establish an expression for the Fridman function (with respect to the Kobayashi metric) when $X = \mathbb{D} \diagup \Gamma$ and $\Gamma$ is a torsion-free discrete subgroup of isometries on the standard open unit disk $\mathbb{D}$. Hence, explicit formulae of the Fridman functions for the annulus $A_r$ and the punctured disk $\mathbb{D}^*$ are derived. These are the first explicit non-constant Fridman functions. Finally, we explore the boundary behaviour of the Fridman functions (with respect to the Kobayashi metric) and the squeezing functions for regular type hyperbolic Riemann surfaces and planar domains respectively.
\end{abstract}


\section{Introduction}
\label{sect:intro}

Let $\mathbb{B}^n(a;r)$ be an $n$-th dimensional Euclidean open ball in $\mathbb{C}^n$ with center $a$ and radius $r$. When $a=0$ and $r=1$, we denote $\mathbb{B}^n(0;1)$ by $\mathbb{B}^n$ and $\mathbb{B}^1$ by $\mathbb{D}$. 
Let $X$ be an $n$-dimensional complex manifold. For any $z_1,z_2 \in X$, let $c_X (z_1,z_2)$ be the Carath\'{e}odory pseudo-distance between $z_1$ and $z_2$ and $k_X (z_1,z_2)$ be the Kobayashi pseudo-distance between $z_1$ and $z_2$. For $d=k$ or $c$, a complex manifold $X$ is said to be \textit{$d$-hyperbolic} if the pseudo-distance $d_X$ is indeed a distance on $X$. For any $z \in X$ and any $r>0$, denote $B^k_X (z;r)$ the open Kobayashi ball in $X$ centred at $z$ with radius $r$.

Let $X$ be an $n$-dimensional $k$-hyperbolic complex manifold. In 1983, Fridman \cite{paper:fri_def_ori} introduced the \textit{Fridman invariant} $h_X (z, \mathbb{B}^n)$, which is defined as 
\begin{equation*}
    h_X (z,\mathbb{B}^n)= \inf \lbrace 1/q_{X,f} (z, \mathbb{B}^n) \: :  f \in \mathcal{U}(\mathbb{B}^n,X)\rbrace
\end{equation*}
where 
\begin{equation*}
    q_{X,f}(z,\mathbb{B}^n)=\sup \lbrace r \, : \, B^k_X (z;r) \subset f(\mathbb{B}^n) \rbrace
\end{equation*}
and $\mathcal{U}(\mathbb{B}^n, X)$ denotes the family of all injective holomorphic functions from $\mathbb{B}^n$ to $X$. Notice that in 1979, Fridman \cite{Fri79} also introduced similar biholomorphic invariant when $\mathbb{B}^n$ is replaced by the unit polydisk and $B^k_X (z;r)$ is replaced by the corresponding Carath\'{e}odory ball.
The Fridman invariant is interesting because it gives some geometric information about the manifold. For instance, Fridman \cite{paper:fri_def_ori} showed that if a connected $k$-hyperbolic complex manifold $X$ has the property that $h_X (z,\mathbb{B}^n)=0$ for some $z \in X$, then $h_X (z,\mathbb{B}^n)=0$ for all $z \in X$ and $X$ is biholomorphic to $\mathbb{B}^n$. 
Also in \cite{paper:fri_def_ori}, he showed that if a bounded strictly pseudoconvex domain $X$ has $\mathcal{C}^3$ boundary, then $\lim\limits_{z \to \partial X} h_X (z,\mathbb{B}^n) =0$. See \cites{Fri79,paper:fri_def_ori} for more applications and properties of $h_X (z,\mathbb{B}^n)$ as well as its Carath\'{e}odory analog.

In 2019, Mahajan and Verma \cite{mahajan2019comparison} identified the Fridman invariant $h_X (z,\mathbb{B}^n)$ as a dual to the squeezing function $S_X (z)$, which can be reformulated as
\begin{equation*}
    S_X (z) 
= 
\sup
\left\lbrace 
\tanh \frac{r}{2}
\: : \:
B^k_{\mathbb{B}^n} (f(z);r) \subset  f(X )
, f \in \mathcal{U} (X, \mathbb{B}^n )
\right\rbrace
\end{equation*}
(see the Appendix for the more common definition of $ S_X (z) $ first introduced in \cite{squ_def1} and how to obtain the above reformulation). 
To see the duality between the squeezing function and the Fridman invariant, Nikolov and Verma
\cite{verma2019}, and independently, Deng and Zhang \cite{deng2018fridman} considered a modification $H_{X}^k (z)$ of $h_X (z,\mathbb{B}^n)$, which is defined to be
\begin{equation*}
    H_{X}^k (z)
:=
\sup
\left\lbrace
\tanh \frac{r}{2} \: : \:
B^k_X (z;r) \subset f(\mathbb{B}^n), f \in \mathcal{U}(\mathbb{B}^n,X)
\right\rbrace.
\end{equation*}
We will call $H_X^k (z)$ the \textit{Fridman function} of $X$ (with respect to the Kobayashi metric). Similarly, its Carath\'{e}odory analog $H^c_X (z)$ can be defined as
\begin{equation*}
    H_{X}^c (z)
=
\sup
\left\lbrace
\tanh \frac{r}{2} \: : \:
B^c_X (z;r) \subset f(\mathbb{B}^n), f \in \mathcal{U}(\mathbb{B}^n,X)
\right\rbrace.
\end{equation*}
Here, $B^c_\Omega (z;r)$ denotes the open Carath\'{e}odory ball in $X$ centred at $z$ with radius $r$. In \cite{verma2019}, Nikolov and Verma showed that
\begin{equation}
\label{eq:fri_order}
S_X (z) \leq H^c_X (z) \leq H^k_X (z)
\end{equation}
for any domain $X \subset \mathbb{C}^n$.

Let $X$ be a $d$-hyperbolic complex manifold. Let $\iota_X^k (z)$ be the \textit{injectivity radius function} at a point $z \in X$ with respect to the Kobayashi metric, which is defined to be, 
\begin{equation*}
    \iota_X^k (z)
    =
    \sup \left\lbrace \tanh \frac{r}{2}  \: : \: 
    \mbox{$B^k_X (z;r)$ is simply connected} 
        \right\rbrace.
\end{equation*}
Similarly, its Carath\'{e}odory analog $\iota_X^c (z)$ is given by
\begin{equation*}
    \iota_X^c (z)
    =
    \sup \left\lbrace \tanh \frac{r}{2}  \: : \: 
    \mbox{$B^c_X (z;r)$ is simply connected} 
        \right\rbrace.
\end{equation*}
For $d=k$ or $c$, the \textit{injectivity radius} $\iota_X^d$ of $X$ with respect to $d_X$ is defined by 
\[
\iota_X^d = \inf_{z \in X} \iota_X^d (z).
\]
For more information about injectivity radius, see for example \cites{inject1,book:hyper_geo,inject2}.

The following theorem relates the Fridman function $H^d_X (z)$ of $X$ and the injectivity radius function $\iota_X^d(z)$. 

\begin{theorem}
\label{thm:fri<iota}
Let $X$ be an $n$-dimensional $d$-hyperbolic complex manifold. Then the following three statements are true;

\begin{enumerate}
\item if $n=1$, $d=k$ or $c$, then $H^d_X (z) \geq \iota^d_X (z) $ for all $z\in X$;

\item if $n \geq1$ and $X = D \diagup \Gamma$, where $D \subset \mathbb{C}^n$ is a $k$-hyperbolic domain with the property that all open Kobayashi balls $B^k_D (z;r)$ of $D$ are simply connected and $\Gamma$ is a torsion-free discrete group of isometries of $(D,k_D)$, then $H^k_X (z) \leq \iota^k_X (z) $ for all $z\in X$;

\item when $n=1$, $H^k_X (z) = \iota^k_X (z) $ for all $z\in X$.

\end{enumerate}
\end{theorem}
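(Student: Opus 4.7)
\medskip
\noindent\textbf{Proof plan.} The three parts are proved in turn, with part (3) following by verifying the hypotheses of (2) for the standard uniformization of a $1$-dimensional $k$-hyperbolic manifold and combining with (1).

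For part (1), one-dimensional uniformization does the work. For any $r$ such that $B^d_X(z;r)$ is simply connected, this ball is a simply connected hyperbolic Riemann surface (hyperbolicity inherited from $X$), hence biholomorphic to $\mathbb{D}=\mathbb{B}^{1}$ via some $g$. Then $g^{-1}$ lies in $\mathcal{U}(\mathbb{D},X)$ with image exactly $B^d_X(z;r)$, so $\tanh(r/2) \leq H^d_X(z)$, and the supremum over admissible $r$ gives the inequality. Connectedness of the ball, the only mild technicality, is automatic in the Kobayashi case and can be arranged in the Carath\'eodory case by restricting to the connected component of $z$.

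For part (2), the strategy is to lift everything through the cover $\pi : D \to X$ (which is a covering because $\Gamma$ is torsion-free and discrete). Given $f \in \mathcal{U}(\mathbb{B}^n,X)$ with $B^k_X(z;r) \subset f(\mathbb{B}^n)$, fix $w \in f^{-1}(z)$ and $\tilde{z} \in \pi^{-1}(z)$; simple connectedness of $\mathbb{B}^n$ produces a unique holomorphic lift $\tilde{f} : \mathbb{B}^n \to D$ with $\tilde{f}(w)=\tilde{z}$, and $\tilde{f}$ is injective because $f=\pi\circ\tilde{f}$ is. The crux is to prove that $\pi$ restricts to a homeomorphism $B^k_D(\tilde{z};r) \to B^k_X(z;r)$. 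Surjectivity, $\pi(B^k_D(\tilde{z};r))=B^k_X(z;r)$, is the standard quotient formula for $k_X$. For injectivity, consider $\Phi := \tilde{f} \circ f^{-1} \circ \pi$ on $B^k_D(\tilde{z};r)$: both $\Phi$ and the identity are lifts of $\pi|_{B^k_D(\tilde{z};r)}$ fixing $\tilde{z}$, so uniqueness of lifts over the connected ball forces $\Phi = \mathrm{id}$. In particular $B^k_D(\tilde{z};r) \subset \tilde{f}(\mathbb{B}^n)$, and the injectivity of $\pi$ on $\tilde{f}(\mathbb{B}^n)$ (inherited from that of $f$) then yields injectivity of $\pi$ on the ball. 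Transferring the simple connectedness of $B^k_D(\tilde{z};r)$ — the standing hypothesis on $D$ — across this homeomorphism makes $B^k_X(z;r)$ simply connected, whence $\tanh(r/2) \leq \iota^k_X(z)$; taking the supremum finishes.

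Part (3) is then immediate: every $1$-dimensional $k$-hyperbolic manifold is a quotient $\mathbb{D}/\Gamma$ with $\Gamma \leq \mathrm{Aut}(\mathbb{D})$ a torsion-free Fuchsian group (the manifold condition forces the deck action to be free), and Kobayashi balls in $\mathbb{D}$ coincide with hyperbolic disks, which are convex and hence simply connected; thus (2) applies and, combined with (1), yields equality. The principal obstacle I expect is the lift-and-descend construction in (2): setting up $\Phi$, verifying it is a $\pi$-lift, and isolating the places where each hypothesis (injectivity of $f$, simple connectedness of $\mathbb{B}^n$, simple connectedness of $D$'s Kobayashi balls) is used.
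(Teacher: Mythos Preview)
Your proposal is correct and follows the same overall architecture as the paper: part (1) via the Riemann mapping theorem applied to a simply connected $d$-ball, part (2) by showing that $\pi$ restricts to a homeomorphism $B^k_D(\tilde z;r)\to B^k_X(z;r)$ once $B^k_X(z;r)\subset f(\mathbb{B}^n)$ (using $\pi(B^k_D(\tilde z;r))=B^k_X(z;r)$ from the quotient formula for $k_X$), and part (3) by uniformization $X\cong\mathbb{D}/\Gamma$ together with the convexity of Kobayashi balls in $\mathbb{D}$.

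The only substantive tactical difference is in part (2). The paper argues injectivity of $\pi$ on $B^k_D(\tilde z;r)$ by contradiction: if $\pi(w_1)=\pi(w_2)$ for distinct $w_1,w_2$ in the ball, connect them by a path, project to a loop in $f(\mathbb{B}^n)$, and use simple connectedness of $f(\mathbb{B}^n)$ to lift a null-homotopy, forcing $w_1=w_2$. You instead lift $f$ to $\tilde f:\mathbb{B}^n\to D$ and observe that $\Phi=\tilde f\circ f^{-1}\circ\pi$ is a continuous lift of $\pi$ on the (connected) ball agreeing with the identity at $\tilde z$, whence $\Phi=\mathrm{id}$ by uniqueness of lifts; this exhibits $\tilde f\circ f^{-1}$ as a section of $\pi$ over $B^k_X(z;r)$ and gives injectivity directly. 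Both arguments sit squarely in covering-space lifting theory and use the same hypotheses at the same places; yours is a touch more constructive, while the paper's avoids introducing $\tilde f$ and $\Phi$ explicitly.
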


\noindent
\begin{remark} \label{ball}
For $d=k$ or $c$, it is known that if $D \subset \mathbb{C}^n$ is a convex domain, then $k_D=c_D$ and any $d$-ball $B^d_D(z;r)$ of $D$ is convex and hence simply connected (see Corollary 4.8.3 and Theorem 4.8.13 of \cite{book:hyper_Kobayashi} for bounded $D$ and Lemma 3.1 and Proposition 3.2 of \cite{Bracci2009} for the unbounded case). Also notice that by the Hermann Convexity Theorem (page 286 of \cite{wolf1972fine}), any bounded symmetric domain is convex.
\end{remark}

When $n=1$, a Riemann surface $X$ is said to be \textit{uniformly thick} if its injectivity radius function $\iota^k_X (z)$ has a positive lower bound. For example, all bounded simply connected domains in $\mathbb{C}$ are uniformly thick whereas punctured domains in $\mathbb{C}$ are not. For more information about uniform thickness, see for example \cites{thick,book:hyper_geo}. We now extend the definition of uniform thickness to higher dimension $d$-hyperbolic complex manifolds as follows. For $d=k$ or $c$, we define a $d$-hyperbolic complex manifold of dimension $n$ to be $d$-\textit{uniformly thick} if its Fridman function $H^d_X(z)$ has a positive lower bound. Note that when $n=1$ and $d=k$, this definition coincides with the conventional one by part 3 of Theorem \ref{thm:fri<iota}. On the other hand, in \cite{squ_def1}, Deng, Guan and Zhang defined a bounded domain $X \subset \mathbb{C}^n$ to be \textit{holomorphic homogeneous regular} \cite{liu2004canonical} or with \textit{uniform squeezing property} \cite{yeung2009} if its squeezing function $S_X(z)$ has a positive lower bound.

Because of inequality (\ref{eq:fri_order}), we also have the following corollary.

\begin{corollary}
If a domain $X \subset \mathbb{C}^n$ has the uniformly squeezing property (i.e., $S_X (z) >c >0$ for some constant $c$  and all $z \in X$), then it is both $c$-uniformly thick and $k$-uniformly thick. (See Theorem 2 of Yeung \cite{yeung2009} for a more general result when $X$ is not a subset of $\mathbb{C}^n$.)
\end{corollary}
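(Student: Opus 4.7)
The plan is essentially an immediate consequence of the chain of inequalities (\ref{eq:fri_order}) quoted from Nikolov--Verma. Assume $S_X(z)>c>0$ for every $z\in X$. Applying (\ref{eq:fri_order}) pointwise yields
\[
H^k_X(z)\;\geq\;H^c_X(z)\;\geq\;S_X(z)\;>\;c \qquad\text{for all } z\in X,
\]
so the Fridman functions with respect to both the Carath\'eodory and Kobayashi metrics are uniformly bounded below by the positive constant $c$. By the definition of $d$-uniform thickness introduced immediately before the corollary---that a $d$-hyperbolic manifold is $d$-uniformly thick iff $\inf_{z\in X} H^d_X(z)>0$---this is precisely the statement that $X$ is simultaneously $c$-uniformly thick and $k$-uniformly thick, as claimed.

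There is no genuine technical obstacle; the full content of the corollary is already packaged into the Nikolov--Verma inequality (\ref{eq:fri_order}) together with the higher-dimensional definition of uniform thickness. What the statement really accomplishes is conceptual, namely showing that holomorphic homogeneous regularity (uniform squeezing) in the sense of Liu--Sun--Yau and Deng--Guan--Zhang implies the newly-introduced notion of uniform thickness, thereby placing the uniform-thickness class above the uniform-squeezing class. The parenthetical reference to Yeung's Theorem~2 suggests the natural route to a generalisation outside $\mathbb{C}^n$: one replaces (\ref{eq:fri_order}) with an analogous pointwise domination of $S_X$ by a Fridman-type quantity in that broader setting, after which the same one-line argument applies verbatim.
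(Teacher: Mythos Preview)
Your proof is correct and matches the paper's approach exactly: the paper simply states that the corollary follows ``because of inequality (\ref{eq:fri_order})'' and gives no further argument. Your write-up just makes this one-line implication explicit.
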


In our recent paper \cite{ourpaper_squeezing_annulus}, we showed that 
\begin{equation*}
    S_{A_r} (z) 
= 
\max
\left\lbrace 
|z|,
\frac{|z|}{r}
\right\rbrace
\end{equation*}
where $A_r= \{ z \: : \: 0<r<|z|<1  \}$ and this gives the precise form of $S_X (z)$ for all bounded non-degenerate doubly-connected domain $X \subset \mathbb{C}$ up to biholomorphism. 
Different proofs of this result based on the methods of harmonic measures and quadratic differentials are given by Gumenyuk and Roth \cite{Roth} and Solynin \cite{Solynin} respectively. Note that when $r\to 0$, we have $S_{\mathbb{D}^*} (z) =|z|$ where $\mathbb{D}^*= \mathbb{D} \setminus \{ 0 \} $. For any bounded homogeneous domain in $\mathbb{C}^n$, both its Fridman function and squeezing function are constant (see \cite{paper:fri_def_ori} and \cite{squ_def1} for some examples of these constant functions). So far no non-constant Fridman function has been explicitly constructed.  In this paper, we will construct for the first time several explicit non-constant Fridman functions. Indeed we will obtain the explicit expressions of $H_{A_r}^d$ and $H_{\mathbb{D}^*}^d$ by applying Theorem \ref{Thm:fri_upper} and \ref{thm:fri:main} below.

\begin{theorem}
\label{Thm:fri_upper}
For $d=k$ or $c$, let $D \subset \mathbb{C}^n$ be a convex domain which contains no complex affine lines and $\Gamma$ be a torsion-free discrete group of isometries of $(D,d_D)$. Let $X = D \diagup \Gamma$ and $\pi$ be the quotient map. For any $z\in X$, let $w\in D$ be any point such that $\pi(w)=z$. Then we have

\begin{equation}
\label{eq:fri_tanh}
H^d_X (z) \leq \min\limits_{\gamma \in \Gamma \setminus \lbrace \mathrm{Id} \rbrace } \tanh \frac{ d_{D} (w,\gamma (w) )}{4}
\end{equation}
\end{theorem}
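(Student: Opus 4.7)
The plan is to bound $r$ in every admissible configuration: I will show that if $f \in \mathcal{U}(\mathbb{B}^n, X)$ and $r > 0$ satisfy $B^d_X(z; r) \subset f(\mathbb{B}^n)$, then $r \leq d_D(w,\gamma(w))/2$ for every $\gamma \in \Gamma \setminus \{\mathrm{Id}\}$. Applying $\tanh(\cdot/2)$, taking the supremum in the definition of $H^d_X(z)$, and then the minimum over $\gamma$ delivers \eqref{eq:fri_tanh}.

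Pick $\zeta_0 \in \mathbb{B}^n$ with $f(\zeta_0) = z$. Since $\mathbb{B}^n$ is simply connected and $\pi:D\to X$ is a covering map, there is a unique holomorphic lift $\tilde f : \mathbb{B}^n \to D$ with $\pi\circ\tilde f = f$ and $\tilde f(\zeta_0)=w$. The crux of the lifting step is the inclusion
\begin{equation*}
B^d_D(w; r) \subset \tilde f(\mathbb{B}^n).
\end{equation*}
For any $w'\in B^d_D(w;r)$, distance-decrease of $\pi$ puts $\pi(w')$ in $B^d_X(z;r)\subset f(\mathbb{B}^n)$; by Remark \ref{ball}, convexity of $D$ makes $B^d_D(w;r)$ convex, so I can take a path $\alpha$ from $w$ to $w'$ inside that ball. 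Because $f$ is an injective holomorphic map between equidimensional complex manifolds it is a biholomorphism onto its image, hence $f^{-1}\circ\pi\circ\alpha$ is a continuous path in $\mathbb{B}^n$ starting at $\zeta_0$; composing with $\tilde f$ produces a lift of $\pi\circ\alpha$ starting at $w$, and uniqueness of path-lifts forces it to equal $\alpha$, so $w'\in\tilde f(\mathbb{B}^n)$.

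The final step is a midpoint argument. Fix $\gamma\in\Gamma\setminus\{\mathrm{Id}\}$, set $R=d_D(w,\gamma(w))$, and assume for contradiction that $r > R/2$. On a convex $d$-hyperbolic domain, Lempert's theorem together with the references in Remark \ref{ball} ensures that the $d_D$-distance is realised along the real segment $[w,\gamma(w)]$ (equivalently along a complex geodesic), so a midpoint $m \in D$ exists with $d_D(w,m)=d_D(m,\gamma(w))=R/2 < r$. Applying the isometry $\gamma^{-1}$ to the second equality gives $d_D(w,\gamma^{-1}(m))=R/2<r$ as well, so both $m$ and $\gamma^{-1}(m)$ lie in $B^d_D(w;r)\subset\tilde f(\mathbb{B}^n)$. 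Since $\Gamma$ is discrete and torsion-free it acts freely on $D$, hence $\gamma^{-1}(m)\neq m$; picking $\zeta_1,\zeta_2\in\mathbb{B}^n$ with $\tilde f(\zeta_1)=m$ and $\tilde f(\zeta_2)=\gamma^{-1}(m)$ we get $\zeta_1\neq \zeta_2$ but $f(\zeta_1)=\pi(m)=\pi(\gamma^{-1}(m))=f(\zeta_2)$, contradicting the injectivity of $f$.

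The step I expect to be slightly delicate is producing the midpoint cleanly for $d=c$ and in the unbounded convex case; once the existence of $d_D$-geodesics through two given points of a convex $d$-hyperbolic domain (which follows from the results invoked in Remark \ref{ball}) is in place, the rest is a standard covering-space argument. A minor additional point to double-check is that the connecting path $\alpha$ can be taken inside $B^d_D(w;r)$, which is exactly what convexity of the ball supplies.
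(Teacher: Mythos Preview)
Your argument is correct and follows the same geometric skeleton as the paper's proof: pick a midpoint $m$ on a $d_D$-geodesic from $w$ to $\gamma(w)$, use that $\gamma^{-1}$ is an isometry to place both $m$ and $\gamma^{-1}(m)$ inside $B^d_D(w;r)$, and then derive a contradiction from the covering structure. The difference lies only in how the contradiction is extracted. The paper projects two paths in $B^d_D(w;r)$ (from $w$ to $m$ and from $w$ to $\gamma^{-1}(m)$) down to $X$, obtaining two paths in $f(\mathbb{B}^n)$ with the same endpoints that are not homotopic in $X$, contradicting the simple connectedness of $f(\mathbb{B}^n)$. You instead lift $f$ through $\pi$ to $\tilde f:\mathbb{B}^n\to D$, establish $B^d_D(w;r)\subset\tilde f(\mathbb{B}^n)$ via a path-lifting argument, and then violate the injectivity of $f$ directly. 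These are dual covering-space manoeuvres; yours is marginally more streamlined since it avoids discussing homotopy classes explicitly.

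Two small points of precision. First, the claim that $d_D$ is realised along the Euclidean segment $[w,\gamma(w)]$ is not correct in general (Kobayashi geodesics in convex domains are not Euclidean straight lines); your parenthetical ``along a complex geodesic'' is what actually holds and is all you need, and the paper handles this step by invoking the Hopf--Rinow theorem for length spaces after checking that $(D,d_D)$ is complete and locally compact. Second, the assertion that a discrete torsion-free group acts freely is true but not immediate: it uses that in a properly discontinuous action point-stabilisers are finite, so any element fixing a point has finite order and hence is the identity. The paper spells this step out; you should too.
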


\noindent
\begin{remark} \label{k-hopf}
 If we assume that $D \subset \mathbb{C}^n$ is a complete $k$-hyperbolic domain, then the inequality in Theorem \ref{Thm:fri_upper} still holds for $d=k$. This is because the topology induced by the Kobayashi distance $k_D$ is the same as the Euclidean topology of $D$ (cf. Theorem 3.2.1 of \cite{book:hyper_Kobayashi}). Hence, $(D,k_D)$ is a complete locally compact metric space. One can then follow the proof of Theorem \ref{Thm:fri_upper} to obtain the inequality.
\end{remark}

\medskip
Suppose that $n=1$ and $d=k$. 
Let $\mathbb{D}=\left\lbrace w  \in \mathbb{C}  \: : \:  |w| <1 \right\rbrace$ be the standard open unit disk in $\mathbb{C}$ and let $\mathbb{H}=\left\lbrace w  \in \mathbb{C}  \: : \:  \mathrm{Im}(w) >0 \right\rbrace $
be the upper half plane. Note that both $\mathbb{D}$ and $\mathbb{H}$ are convex and contain no complex affine lines. Then the following theorem states that the equality in Theorem \ref{Thm:fri_upper} always holds when $D=\mathbb{D}$ or $\mathbb{H}$. 

\begin{theorem}
\label{thm:fri:main}
Let $D=\mathbb{D}$ or $\mathbb{H}$.
Let $\Gamma$ be a torsion-free discrete group of isometries of $(D,\rho_D)$. Let $X = D \diagup \Gamma$ and $\pi$ be its quotient map. For any $z\in X$, let $w$ be any point in $D$ such that $\pi(w)=z$. Then we have
\[
    H^k_X (z) = \min\limits_{\gamma \in \Gamma \setminus \lbrace \mathrm{Id} \rbrace } \tanh \frac{ \rho_D (w,\gamma (w) )}{4} 
\]
\end{theorem}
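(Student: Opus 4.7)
\medskip\noindent\textbf{Proposed proof of Theorem \ref{thm:fri:main}.}

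The upper bound is immediate from Theorem \ref{Thm:fri_upper}: since $\mathbb{D}$ and $\mathbb{H}$ are convex, contain no complex affine lines, and are $k$-hyperbolic with $k_D=\rho_D$, the cited theorem directly gives $H^k_X(z)\le \min_{\gamma\neq\mathrm{Id}}\tanh(\rho_D(w,\gamma(w))/4)$. The content of the theorem is the matching lower bound, which I would obtain by exhibiting, for every $R$ slightly below the critical radius, an injective holomorphic $f:\mathbb{D}\to X$ whose image contains the Kobayashi ball $B^k_X(z;R)$.

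Set $r^\ast:=\min_{\gamma\in\Gamma\setminus\{\mathrm{Id}\}}\rho_D(w,\gamma(w))$, which is attained and positive because $\Gamma$ is discrete and torsion-free, and fix $R<R^\ast:=r^\ast/2$. The first step is to show that $\pi$ is injective on $B^k_D(w;R)$. If $w_1,w_2\in B^k_D(w;R)$ satisfied $\pi(w_1)=\pi(w_2)$, there would be some $\gamma\neq\mathrm{Id}$ with $\gamma(w_1)=w_2$, so
\[
\rho_D(w,\gamma(w))\le \rho_D(w,w_2)+\rho_D(\gamma(w_1),\gamma(w))=\rho_D(w,w_2)+\rho_D(w_1,w)<2R<r^\ast,
\]
contradicting the definition of $r^\ast$. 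The second step is to identify the image: using the standard quotient formula $k_X(\pi(u),\pi(v))=\inf_{\gamma\in\Gamma}\rho_D(u,\gamma(v))$ (valid because $\pi:(D,\rho_D)\to(X,k_X)$ is a holomorphic covering by a local isometry), one checks both inclusions and concludes $\pi(B^k_D(w;R))=B^k_X(z;R)$. Hence $\pi$ restricts to a biholomorphism $B^k_D(w;R)\xrightarrow{\sim} B^k_X(z;R)$.

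By Remark \ref{ball}, the ball $B^k_D(w;R)$ is convex, in particular a simply connected proper subdomain of $\mathbb{C}$, so the Riemann mapping theorem produces a biholomorphism $\phi:\mathbb{D}\to B^k_D(w;R)$. Setting $f:=\pi\circ\phi:\mathbb{D}\to X$ gives an injective holomorphic map in $\mathcal{U}(\mathbb{D},X)$ with $f(\mathbb{D})=B^k_X(z;R)$, so by the definition of $H^k_X$,
\[
H^k_X(z)\ge \tanh\frac{R}{2}.
\]
Letting $R\nearrow R^\ast$ yields $H^k_X(z)\ge \tanh(r^\ast/4)$, which combined with the upper bound from Theorem \ref{Thm:fri_upper} completes the proof.

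The main technical point I expect to have to justify carefully is the identification $\pi(B^k_D(w;R))=B^k_X(z;R)$; this is where the hypothesis $D=\mathbb{D}$ or $\mathbb{H}$ matters, since it guarantees that the Kobayashi distance on the quotient $X=D/\Gamma$ is exactly the quotient of the Poincaré distance, so that distances computed upstairs are actually realized downstairs. Everything else (the packing argument showing $\pi$ is injective, and the Riemann mapping step) is essentially formal.
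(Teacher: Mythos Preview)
Your proof is correct and follows essentially the same approach as the paper: both establish the lower bound via the triangle-inequality argument showing that $\pi$ is injective on $B^k_D(w;R)$ for $R<r^\ast/2$, identify $\pi(B^k_D(w;R))=B^k_X(z;R)$ via the quotient formula for $k_X$, and then invoke the Riemann mapping theorem. The only organizational difference is that the paper first appeals to the identity $H^k_X(z)=\iota^k_X(z)$ from Theorem~\ref{thm:fri<iota} and phrases the key step as ``$B^k_X(z;R)$ is simply connected'' (proved by a path-lifting contradiction), whereas you bypass $\iota^k_X$ and directly build the injective map $f=\pi\circ\phi$; the underlying content is identical.
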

Here, for $D=\mathbb{D}$ or $\mathbb{H}$, $\rho_D$ denotes the Poincar\'{e} metric on $D$ and it is well-known that $\rho_D=k_D=c_D$ when $D \subset \mathbb{D}$ is simply connected. In particular, Theorem \ref{thm:fri:main} allows us to obtain an explicit formula for $H^k_{A_r} (z)$ and $H^k_{\mathbb{D}^*} (z)$ in Theorem \ref{thm:fri_k_ar} and Theorem \ref{thm:fri_k_disk} respectively. We also obtain an explicit formulae for $H^c_{A_r} (z)$ and $H^c_{\mathbb{D}^*} (z)$ in Theorem \ref{thm:fri_c_ar} and Theorem \ref{thm:fri_c_disk} respectively. From these four theorems, we notice that for $d=k$ or $c$, if $p\in \partial A_r$ or $\partial \mathbb{D}$, then
\begin{equation*}
   \lim\limits_{z \to p} H^d_{A_r} (z) =
   \lim\limits_{z \to p} H^d_{\mathbb{D}^*} (z) =
   1
\end{equation*}
and if $p=0$, then $   \lim\limits_{z \to p} H^d_{\mathbb{D}^*} (z) =
   0$. These suggest us to study the boundary behaviour of certain Riemann surfaces.

A Riemann surface $X$ is said to be \textit{of regular type} if 
\begin{enumerate}
\item all connected component of the boundary $\partial X$ is either a Jordan curve or an isolated point, and;
\item all connected component of $\partial X$ are separated, i.e., for all connected component $\sigma$ of $\partial X$, there exists an open neighbourhood $U_\sigma$ of $\sigma$ such that $U_\sigma \cap \partial X = \sigma$.
\end{enumerate}
For example, $A_r$ and $\mathbb{D} \setminus \{ p_1,\dots, p_n \}$ are of regular type. Let $X$ be a hyperbolic regular type Riemann surface. By the Uniformization Theorem, we can assume that $X= \mathbb{D} \setminus \Gamma$ where $\Gamma$ is a torsion-free Fuchsian group (see for example Corollary 1.1.49 of \cite{aba}). Hence Theorem \ref{thm:fri:main} also allows us to explore the boundary behaviour of $H^k_X (z)$ when $n=1$ and $X$ is a Riemann surface of regular type. This is stated as the following theorem.

\begin{theorem}
\label{thm:fri:boundary_grow}
Let $X$ be a hyperbolic Riemann surface of regular type and $p \in \partial X$ be a boundary point. Let $\sigma \subset \partial X$ be the boundary component $p$ belongs to. 
\begin{enumerate}
    \item If $\sigma$ has only one point, then  we have 
  \[ \lim\limits_{z \to p} H^k_X (z) =0. \]
Hence,  $\lim\limits_{z \to p} S_X (z)= \lim\limits_{z \to p} H^c_X (z) =0$ when $X$ is a hyperbolic planar domain. 
    \item If $\sigma$ has more than one point, then we have 
\[ \lim\limits_{z \to p} H^k_X (z) =1. \]
\end{enumerate}
\end{theorem}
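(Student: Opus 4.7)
\medskip

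The plan is to combine the Uniformization Theorem with Theorem~\ref{thm:fri:main}: writing $X = \mathbb{D}/\Gamma$ with $\Gamma$ a torsion-free Fuchsian group and $w \in \mathbb{D}$ a lift of $z$,
\[
H^k_X(z) \;=\; \min_{\gamma \in \Gamma \setminus \{\mathrm{Id}\}} \tanh \frac{\rho_{\mathbb{D}}(w,\gamma(w))}{4},
\]
so the theorem reduces to analysing the displacement spectrum as $w$ approaches a lift $\tilde p \in \partial\mathbb{D}$ of $p$.

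For part~(1), the isolated, separated boundary point $p$ corresponds to a cusp of $\Gamma$ by standard Fuchsian uniformization theory: there is a parabolic $\gamma_0 \in \Gamma$ fixing some $\tilde p \in \partial\mathbb{D}$ and an invariant horoball at $\tilde p$ that projects onto a punctured neighbourhood of $p$ in $X$. I would then choose lifts $w$ of $z$ inside this horoball so that $w \to \tilde p$ as $z \to p$. After conjugating to the upper half-plane model with $\tilde p = \infty$ and $\gamma_0(w) = w + 1$, the explicit formula
\[
\rho_{\mathbb{H}}(w,\gamma_0(w)) \;=\; \operatorname{arcosh}\!\left(1 + \frac{1}{2\,\operatorname{Im}(w)^2}\right)
\]
shows that $\rho_{\mathbb{H}}(w,\gamma_0(w)) \to 0$ as $\operatorname{Im}(w) \to \infty$, whence $H^k_X(z) \le \tanh(\rho_{\mathbb{D}}(w,\gamma_0(w))/4) \to 0$. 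When $X \subset \mathbb{C}$ is a planar domain, the statements for $S_X$ and $H^c_X$ follow immediately from the chain $S_X \le H^c_X \le H^k_X$ in~\eqref{eq:fri_order}.

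For part~(2), the lift $\tilde p$ now belongs to the ordinary set $\Omega(\Gamma) \subset \partial\mathbb{D}$: because $\sigma$ is a Jordan-curve component separated from the remaining components of $\partial X$, a sufficiently small neighbourhood $V'$ of $p$ in $\bar X$ has $V' \cap X$ simply connected, and therefore lifts homeomorphically to a region of $\mathbb{D}$ whose closure in $\bar{\mathbb{D}}$ meets $\partial\mathbb{D}$ in a non-degenerate arc through $\tilde p$. Since no non-trivial element of a torsion-free Fuchsian group fixes an ordinary point, proper discontinuity of $\Gamma$ on $\mathbb{D} \cup \Omega(\Gamma)$ (combined with a finite refinement to handle the few elements that might initially move $\tilde p$ only slightly) furnishes a neighbourhood $W$ of $\tilde p$ in $\bar{\mathbb{D}}$ with $\gamma(W) \cap W = \emptyset$ for every $\gamma \neq \mathrm{Id}$. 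Taking the lifts $w$ of $z$ in $W \cap \mathbb{D}$, every orbit point $\gamma(w)$ then lies in $\mathbb{D}\setminus W$, yielding the $\gamma$-independent lower bound
\[
\rho_{\mathbb{D}}(w,\gamma(w)) \;\ge\; \rho_{\mathbb{D}}(w,\mathbb{D}\setminus W).
\]
Passing to the half-plane model with $\tilde p = \infty$, so that $\mathbb{D}\setminus W$ becomes a bounded subset of $\mathbb{H}$, and using $\rho_{\mathbb{H}}(z_1,z_2) = \operatorname{arcosh}(1 + |z_1-z_2|^2/(2\operatorname{Im}(z_1)\operatorname{Im}(z_2)))$, one checks directly that $\rho_{\mathbb{D}}(w,\mathbb{D}\setminus W) \to \infty$ as $w \to \tilde p$. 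Therefore $\min_\gamma \rho_{\mathbb{D}}(w,\gamma(w)) \to \infty$ and $H^k_X(z) \to 1$.

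The main obstacle is precisely the uniformity in $\gamma$ in part~(2). Individually, $\rho_{\mathbb{D}}(w,\gamma(w)) \to \infty$ is straightforward for each fixed $\gamma \neq \mathrm{Id}$ (hyperbolic elements have positive translation length and parabolic ones do not fix an ordinary point), but to conclude $H^k_X(z) \to 1$ one needs a single lower bound valid simultaneously for every non-identity element of $\Gamma$. Proper discontinuity on $\mathbb{D} \cup \Omega(\Gamma)$ supplies exactly such a bound, and the \emph{separated components} clause in the definition of regular type is what guarantees that $\tilde p$ lies in $\Omega(\Gamma)$ rather than in the limit set of $\Gamma$.
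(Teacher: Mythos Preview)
Your argument is correct, and for Part~(1) it is essentially the paper's argument: both upper-bound $H^k_X(z)$ by the displacement of a single parabolic element at the cusp and show that displacement tends to $0$ along lifts approaching the parabolic fixed point.

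For Part~(2) your route is genuinely different, and in one respect stronger. The paper quotes the same structural input (Theorem~1.1.57 of Abate: $\pi$ extends to an arc $\Sigma\subset\partial\mathbb{D}$ on which $\Gamma$ acts properly discontinuously) but then proceeds case-by-case, using Beardon's explicit formulae for $\sinh\tfrac12\rho_{\mathbb{D}}(w,\gamma(w))$ according to whether $\gamma$ is hyperbolic or parabolic, to deduce $Q_\gamma(w)\to 1$ for \emph{each fixed} $\gamma$. The paper then passes directly to $\min_\gamma Q_\gamma(w)\to 1$, leaving the required uniformity in $\gamma$ implicit. Your approach instead exploits proper discontinuity at the ordinary point $\tilde p$ to produce a single neighbourhood $W$ with $\gamma(W)\cap W=\varnothing$ for every non-identity $\gamma$, giving the $\gamma$-independent bound $\rho_{\mathbb{D}}(w,\gamma(w))\ge \rho_{\mathbb{D}}(w,\mathbb{D}\setminus W)\to\infty$. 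This sidesteps the Beardon formulae entirely and makes the passage from ``each $\gamma$'' to ``$\min_\gamma$'' transparent; the paper's case analysis, by contrast, gives more explicit information about the rate at which each $Q_\gamma$ tends to $1$ but requires an extra (unstated) uniformity step to reach the conclusion.
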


Note that the boundary behaviour of $S_X$ has been studied intensively, see for example, \cites{squ_infin,squ_def1,deng2016properties,fornaess2016estimate,fornaess2016non,joo2016boundary,kim2016uniform,nikolov2018behavior,zimmer2018gap,nikolov2020estimates,rong2020comparison,zimmer2019characterizing} and the survey \cite{deng2019holomorphic}. Notice that in these papers, the boundaries of the domains are assumed to satisfy certain smoothness conditions while there is no smoothness assumption on the boundaries of $X$ in Theorem \ref{thm:fri:boundary_grow}.

In views of Theorem \ref{thm:fri<iota}, it is natural to ask the following questions.\\

\noindent \textbf{Question 1.}
\textit{For which $c$-hyperbolic complex manifold $X$ can we have  
$H^c_X (z) \leq \iota^c_X (z) $ for all $z\in X$ ?}\\

\noindent \textbf{Question 2.}
\textit{Let $d=c$ or $k$. For which $d$-hyperbolic complex manifold $X$ the equality
\[ H^d_X (z) \equiv \iota^d_X (z)\]
holds? In particular, do we have $H^c_X (z) \equiv \iota^c_X (z)$ when $n=1$?} 

On the other hand, our studies on the boundary behaviour of $H^k_X (z)$ in Theorem \ref{thm:fri:boundary_grow} suggests the following question.

\begin{flushleft}
\textbf{Question 3.}
\textit{
Under the assumptions of Theorem \ref{thm:fri:boundary_grow} (case 2), do we have 
\[ \lim\limits_{z \to p} H^c_X (z) =1  \quad (\mbox{or even} \, \lim\limits_{z \to p} S_X (z) =1 \, \mbox{for planar} \, X)\, ?
\]
  }
\end{flushleft}

The rest of the paper goes as follows. We will prove the Theorems \ref{thm:fri<iota}, \ref{Thm:fri_upper} and \ref{thm:fri:main} in Section \ref{sect:result}. Then in Section \ref{sect:ex}, we will give explicit formulae for $H^k_{A_r}$,$H^c_{A_r}$ and also $H^k_{\mathbb{D}^*}$,$H^c_{\mathbb{D}^*}$  (Theorems \ref{thm:fri_k_ar},\ref{thm:fri_c_ar},
\ref{thm:fri_k_disk} and \ref{thm:fri_c_disk}). As applications to these explicit formulae,  we will calculate the injectivity radius of $A_r$ and $\mathbb{D}^*$ and  address some problems on $\frac{S_X (z)}{H^d_X (z)}$ studied by Rong and Yang  \cite{rong2020comparison} in Theorems \ref{thm:quo_Ar} and \ref{thm:quo_Disk}. Finally, we will also study the boundary behaviour of $H_X^k(z)$ for $k$-hyperbolic Riemann surface $X$ of regular type in Section \ref{sect:boundary_growth} (see Theorem \ref{thm:fri:boundary_grow}). 

Throughout the paper, we adopt the following notations.
\begin{itemize}
    \item $\mathcal{O}(X,Y)$ is the family of all holomorphic functions from $X \subset \mathbb{C}^n$ to $Y\subset \mathbb{C}^n$. 
    
    \item $\mathcal{U}(X,Y)$ is the family of all injective holomorphic functions from $X \subset \mathbb{C}^n$ to $Y\subset \mathbb{C}^n$. 

    \item $\mathbb{D}$ is the standard open unit disk in $\mathbb{C}$ and $\rho_\mathbb{D}$ is the Poincar\'{e} metric on $\mathbb{D}$ with density function $\frac{2}{1-|z|^2}.$ Note that for any $z \in \mathbb{D}$, we have $\rho_\mathbb{D} (0,z) = 2 \tanh^{-1} |z|.$ 
    
    \item $\mathbb{H}$ is the upper-half plane in $\mathbb{C}$ and $\rho_\mathbb{H}$ is the Poincar\'{e} metric on $\mathbb{H}$ with density function $\frac{1}{\mathrm{Im}(w)}.$ 
    
\item For any two points $z_1,z_2$ of a complex manifold $X$, the Carath\'{e}odory pseudo-distance $c_X (z_1,z_2)$ is defined to be 
\begin{equation*}
    c_X (z_1,z_2) := \sup \{ \rho_\mathbb{D} ( f(z_1),f(z_2) ) \: : \: f \in \mathcal{O} (X, \mathbb{D})\}. 
\end{equation*}

\item For any two points $z_1,z_2$ of a complex manifold $X$, the Kobayashi pseudo-distance $k_X (z_1,z_2)$ is defined to be
\[ 
k_X (z_1,z_2)
=
\inf \left\lbrace
\sum^n_{i=1} \rho_\mathbb{D} ( a_i, b_i)
\: : \: 
n \geq 1, 
f_i \in \mathcal{O} (\mathbb{D} ,X) 
\right\rbrace
\]
with $p_0,\ldots ,p_n \in X$, $a_1,\ldots, a_n,b_1, \ldots, b_n \in \mathbb{D}$, $f_1 (a_1) =p_0 =z_1$, $f_{i} (b_{i})=f_{i+1}( a_{i+1})$ for $i=1,\ldots,n-1$ and $f_n (b_n) =p_n =z_2$. As a remark, the Kobayashi pseudo-distance can be equivalently defined to be the largest pseudo-distance bounded above by the \textit{Lempert function} $L_X$, which is
\begin{equation*}
    L_X (z_1,z_2) = \inf \{ \rho_\mathbb{D}   (w_1,w_2) \: : \: f\in \mathcal{O}(\mathbb{D},X), f(w_1)=z_1, f(w_2)=z_2
    \}.
\end{equation*}

    \item Denote $B^k_X (z;r)$ (respectively, $B^c_X (z;r)$) the open Kobayashi ball (respectively, Carath\'{e}odory ball) in $X$ centered at $z$ with radius $r>0$, that is,
\begin{equation*}
    B^d_X (z;r)= 
    \{ w \in X \: : \: d_X (z,w) <r  
    \}
\end{equation*}
and $d$ can be either $k$ or $c$. Notice that $ B^k_{\mathbb{B}^n} (0;r) = \mathbb{B}^n (0; 2\tanh^{-1} r)$.
\end{itemize}


\section{Proofs of the main results}
\label{sect:result}

We first prove Theorem \ref{thm:fri<iota}. 

\begin{proof}[Proof of Theorem \ref{thm:fri<iota}.]
For part 1, we first consider the case $n=1$ and $d=k$. Then $X$ is a $k$-hyperbolic Riemann surface. For any open ball $B^k_X(z;r)$ of $X$, the inclusion map $\phi : B^k_X(z;r) \to X$ is holomorphic and hence distance-decreasing. Thus, $B^k_X (z;r)$ is $k$-hyperbolic. Suppose that $B^k_X (z;r)$ is simply connected. Then as $B^k_X (z;r)$ is $k$-hyperbolic, $B^k_X (z;r)$ is biholomorphic to $\mathbb{D}$ by the Uniformization Theorem (see Theorem 4.6.1. of \cite{book:hyper_geo}). Therefore, there exists a biholomorphic map $f: \mathbb{D} \to B^k_X (z;r)$ such that $B^k_X (z;r)= f (\mathbb{D})$ and hence $H^k_X (z) \geq \iota^k_X (z) $. We now consider the case when $n=1$ and $d=c$. Note that for any $z_1,z_2 \in X$, we have 
\begin{equation*}
c_X (z_1, z_2) \leq k_X (z_1, z_2).
\end{equation*}
Therefore, being a $c$-hyperbolic Riemann surface, $X$ is $k$-hyperbolic. It follows from the arguments for the previous case that if $B^c_X (z;r)$ is simply connected, then there exists a biholomorphic map $f: \mathbb{D} \to B^c_X (z;r)$ such that $B^c_X (z;r)= f (\mathbb{D})$ and hence $H^c_X (z) \geq \iota^c_X (z) $. This proves part 1.

For part 2, consider the case when $n \geq1$,  $d=k$ and $X = D \diagup \Gamma$, where $D \subset \mathbb{C}^n$ is a $k$-hyperbolic domain with all the open balls $B^k_X (z;r)$ of $X$ are simply connected and $\Gamma$ is a torsion-free discrete group of isometries of $D$.  Let $\pi$ be the quotient map of $X = D \diagup \Gamma$. That $H^k_X (z) \leq \iota_X^k (z)$ for all $z\in D$ will follow if $B^k_X (z;r)$ is simply connected whenever there exists an injective holomorphic function $f:\mathbb{B}^n \to X$ such that $B^k_X (z;r) \subset f(\mathbb{B}^n)$. We first show that $B^{k}_{X}(z;r)=\pi(B^{k}_{D} (w;r))$ whenever $w \in D$ and $\pi (w)=z$.

From the contraction property of Kobayashi metric, we have 
$$ k_{X} ( \pi(w_1) , \pi(w_2)) \leq k_{D} (w_1,w_2)$$
for any $w_1,w_2 \in D$. Hence, for any $r>0$ and $\zeta \in B^k_{D} (w;r)$, we have 
$$k_{X} ( z , \pi(\zeta)) \leq k_{D} (w,\zeta) < r$$ and thus

$$\pi( B^k_{D} (w;r) ) \subset B^k_{X}(z;r).$$
It is known that for any $z_1 , z_2 \in X$ and $w_1,w_2 \in D$ with $\pi(w_1)=z_1$ and $\pi(w_2)=z_2$, 
$$k_{X} ( z_1 , z_2) = \inf_{w_2 \in D}k_{D} (w_1,w_2)$$
(see for example, Theorem 3.2.8 of \cite{book:hyper_Kobayashi}). Then it follows that for any $\epsilon>0$,
$$B^k_{X}(z;r) \subset \pi( B^k_{D} (w;r+\epsilon)).$$
Hence, we have $B^{k}_{X}(z;r)=\pi(B^{k}_{D} (w;r))$.

If $\pi$ is injective on $B^{k}_{D} (w;r)$, then $B^{k}_{X}(z;r)=\pi(B^{k}_{D} (w;r))$ will be simply connected as we have assumed that any open ball $B^{k}_{D} (w;r)$ is simply connected.

If $\pi$ is not injective on $B^{k}_{D} (w;r)$, then there exist distinct $w_1,w_2 \in B^{k}_{D} (w;r)$ such that $\pi(w_1)=\pi(w_2)$. Let $\gamma$ be a curve in $B^{k}_{D} (w;r)$ joining $w_1$ to $w_2$. Then $\pi(\gamma)$ is a closed curve in $f(\mathbb{B}^n)$ as $\pi(B^{k}_{D} (w;r))=B^k_X (z;r) \subset f(\mathbb{B}^n)$.  

Since $\mathbb{B}^n$ is simply connected and $f$ is injective and holomorphic, we have $f(\mathbb{B}^n)$ is simply connected in $X$. Therefore, $\pi(\gamma)$ is homotopic to a point in $f(\mathbb{B}^n)$. If we lift this homotopy to a homotopy in $D$ through the covering map $\pi$, we can deduce that $w_1=w_2$ which is a contradiction.  

For part 3, note that $X$ is a $k$-hyperbolic Riemann surface. By part 1, we have $H^k_X (z) \geq \iota^k_X (z)$. 
By the Uniformization Theorem, $X$ is biholomorphic to $\mathbb{D} \diagup \Gamma$ where $\Gamma$ is a torsion-free discrete group of isometries of $(\mathbb{D},k_{\mathbb{D}})$ (cf. for example Corollary 1.1.49 of \cite{aba}). Since the Fridman functions and the injectivity radius functions are biholomorphic invariant, by part 2 we have $H^k_X (z) \leq \iota^k_X (z)$. The result follows.
 
\end{proof}

\begin{proof}[Proof of Theorem \ref{Thm:fri_upper}.] We first show that the expression 
\[ 
\min\limits_{\gamma \in \Gamma \setminus \lbrace \mathrm{Id} \rbrace } \tanh \frac{ d_{D} (w,\gamma (w) )}{4}
\]
is independent of the choice of $w$. To see this, consider another point $w'\in D$ such that $\pi(w')=z$. Then we have $w' = \sigma (w)$ for some $\sigma  \in \Gamma$. Because $\Gamma$ is a group of isometries of $d_D$, we have 
\begin{align*}
d_{D} (w',\gamma (w'))
&= d_{D} \left( \sigma^{-1} (w') ,\sigma^{-1} (\gamma (w')) \right) 
\\
&= d_{D} \left( w , \widetilde{ \gamma } (w) \right) 
\end{align*}
for some $\widetilde{ \gamma } = \sigma^{-1} \circ \gamma \circ \sigma$. Note that $\gamma \in \Gamma \setminus \lbrace \mathrm{Id} \rbrace$ if and only if $\widetilde{ \gamma } \in \Gamma \setminus \lbrace \mathrm{Id} \rbrace$. It follows that 
\[ 
\min\limits_{\gamma \in \Gamma \setminus \lbrace \mathrm{Id} \rbrace } \tanh \frac{ d_{D} (w,\gamma (w) )}{4}
=
\min\limits_{\gamma \in \Gamma \setminus \lbrace \mathrm{Id} \rbrace } \tanh \frac{ d_{D} (w',\gamma (w') )}{4}
\]
for any $w,w' \in D$ such that $\pi (w) =\pi (w')$.

Now we will establish some useful facts about the metric geometry of $(D,d_D)$. Because $D\subset \mathbb{C}^n$ is convex and contains no complex affine lines, we know that
for $d=k$ or $c$, $D$ is $d$-hyperbolic and $(D,d_{D})$ is a complete metric space in the sense that all closed balls of $(D,d_{D})$ are compact (see \cite{Barth1980} when $D$ is bounded and Theorem 1.1 and Lemma 3.1 of \cite{Bracci2009} when $D$ is unbounded).
In addition, $k_D=c_D$ (cf. Theorem 4.8.13 of \cite{book:hyper_Kobayashi} for bounded $D$ and Lemma 3.1 of \cite{Bracci2009} for the unbounded case).
Hence, by Theorem 3.2.1 of \cite{book:hyper_Kobayashi}, $k_D=c_D$ induces the Euclidean topology of $D$ which is locally compact. Therefore, the metric space $(D,d_D)$ is complete and locally compact and we can apply the Hopf-Rinow Theorem for length space (see for example, Proposition 3.7 of \cite{bridson2013}) to conclude that this metric space is a geodesic space which means any two points in $D$ are connected by a geodesic in $(D,d_D)$. Finally, by Corollary 4.8.3 of \cite{book:hyper_Kobayashi} and Proposition 3.2 of \cite{Bracci2009}, any $k$-ball (and hence $c$-ball) of $D$ is convex and hence path connected. Actually in general, for any complex manifold, any two points in a $k$-ball can be joined by a rectifiable curve (cf. Corollary 3.1.17 of \cite{book:hyper_Kobayashi}). Notice that $c$-balls can be disconnected (see \cite{jarnicki1992} or Chapter 2 of   \cite{jarnickipflug}).

Because $k_D=c_D$ and $H_X^c \le H_X^k$, we only need to show that (\ref{eq:fri_tanh}) holds for $d=k$.

Note that as a quotient of a $k$-hyperbolic complex manifold under a torsion-free discrete group of isometries, $X$ is also $k$-hyperbolic (cf. Theorem 3.2.8 of \cite{book:hyper_Kobayashi}). From the proof of  
part 2 of Theorem \ref{thm:fri<iota}, we also have $B^{k}_{X}(z;r)=\pi(B^{k}_{D} (w;r))$.

As $\Gamma$ is a torsion-free discrete group, the minimum of the set \[ 
\left\lbrace
k_{D} (w,\gamma (w) ) \: : \: \gamma \in \Gamma \setminus \lbrace \mathrm{Id} \rbrace 
\right\rbrace
\]
is attained and is greater than $0$ (see Theorem 5.3.4. of \cite{Book:Found_Hyper}). So if
\begin{equation*}
    r > \min\limits_{\gamma \in \Gamma \setminus \lbrace \mathrm{Id} \rbrace } \frac{ k_{D} (w,\gamma (w) )}{2}>0,
\end{equation*}
then there exists some $\widetilde{\gamma} \in \Gamma \setminus \lbrace \mathrm{Id} \rbrace$, such that 
\begin{equation*}
    r > \frac{ k_{D} (w,\widetilde{\gamma}  (w) )}{2} >0.
\end{equation*}
Consequently, on the geodesic arc from $w$ to $\widetilde{\gamma}(w)$, there exists some $u$ such that $u \in B^k_{D} (w;r)\cap B^k_{D} (\widetilde{\gamma}(w);r)$. Recall that an element $\gamma$ in $\Gamma$ is said to be \textit{elliptic} if $\gamma$ fixes an interior point of $D$. Because $\Gamma$ is discrete, any elliptic element $\gamma$ in $\Gamma$ has finite order (see the remark after Theorem 5.4.1 in \cite{Book:Found_Hyper}). But since $\Gamma$ is torsion free, we have $\Gamma$ contains no elliptic elements and hence $\gamma$ has no fixed points in the interior for any $\gamma \in \Gamma \setminus \lbrace \mathrm{Id} \rbrace$. In particular, we have $u \neq \widetilde{\gamma}^{-1} (u)$. 

As $\widetilde{\gamma}^{-1}$ is an isometry of $(D,k_D)$, we have
\[ k_D ( \widetilde{\gamma}^{-1} (u) , w )= k_D (u, \widetilde{\gamma} (w)  )<r.\]
This implies that $\widetilde{\gamma}^{-1} (u) \in B^k_{D} (w;r)$. Recall that any open ball $B^k_D (w;r)$ is path connected. Let $L_1$ be a simple path in $B^k_D (w;r)$ joining $u$ to $w$ and $L_2$ be a simple path in $B^k_D (w;r)$ from $w$ to $\widetilde{\gamma}^{-1} (u)$. Define $l_1=\pi (L_1)$ and $l_2=\pi (L_2)$. Since $L_1,L_2 \subset B^k_{D} (w;r)$, we have $l_1,l_2 \subset B^k_X (z;r)=\pi(B^{k}_{D} (w;r))$. Moreover, $l_1$ and $l_2$ have the same end points, namely, $z_1= \pi (w)$ and $z_2=\pi (u)= \pi ( \widetilde{\gamma}^{-1} (u))$. 

Assume to the contrary that $H^k_X (z) > \min\limits_{\gamma \in \Gamma \setminus \lbrace \mathrm{Id} \rbrace } \tanh \frac{ k_{D} (w,\gamma (w) )}{4}$. Let $r>0$ such that
\[
 H^k_X (z)>  \tanh \frac{r}{2} > \min\limits_{\gamma \in \Gamma \setminus \lbrace \mathrm{Id} \rbrace } \frac{ k_{D} (w,\widetilde{\gamma}  (w) )}{4} .
\]
Then $B^k_X( z;r)\subset f(\mathbb{B}^n)$ for some $f \in \mathcal{U} ( \mathbb{B}^n , X )$. Because $\Gamma$ is a torsion-free discrete group of isometries of $(D,k_D)$, $\pi$ is a regular covering map of $X$ (see for example Theorem 81.5 in \cite{book:topo}). Since $L_1$ and $L_2$ have the same starting point $w$  but different end points $u$ and $\widetilde{\gamma}^{-1} (u)$, $L_1$ and $L_2$ are not homotopic in $D$. By Theorem 54.3 in \cite{book:topo}, $l_1$ and $l_2$ are not homotopic in $X$. It follows that 
$l_1$ and $l_2$ are not homotopic in $f(\mathbb{B}^n)$, which is a contradiction as $f(\mathbb{B}^n)$ is simply-connected. 
Consequently,
\begin{equation*}
    H_X^c (z) \leq H_X^k (z) \leq \min\limits_{\gamma \in \Gamma \setminus \lbrace \mathrm{Id} \rbrace } \tanh \frac{ k_{D} (w,\gamma (w) )}{4}=\min\limits_{\gamma \in \Gamma \setminus \lbrace \mathrm{Id} \rbrace } \tanh \frac{ c_{D} (w,\gamma (w) )}{4}
\end{equation*}
and we complete the proof.
 
\end{proof}

\begin{proof}[Proof of Theorem \ref{thm:fri:main}.]
In this setting, we have $k_D=\rho_D$, $k_X=\rho_X$ and 
\begin{equation*}
    \rho_X (z, t) = \min \{ \rho_D (w,s) \: : \:  \pi (s)= t \}
\end{equation*}
(cf. Chapter 7 of \cite{book:hyper_geo}). This implies that implies that $ \pi( B^k_{D} (w;r) ) = B^k_{X}(z;r) $ (see also the proof of part $2$ of Theorem \ref{thm:fri<iota}). By Theorem \ref{thm:fri<iota}, $H^k_X (z) = \iota^k_X (z)$ when $n=1$. Thus it suffices to prove that for any 
\begin{equation*}
    r \leq \min\limits_{\gamma \in \Gamma \setminus \lbrace \mathrm{Id} \rbrace } \frac{ \rho_D (w,\gamma (w) )}{2} ,
\end{equation*}
$B^k_{X}(z;r)$ is simply-connected in $X$. Assume to the contrary that it is not simply-connected. Then there exists paths $L$, $L'$, both start at $z$ and end at some point $t \in B^k_{X}(z;r)$, but not homotopic in $B^k_{X}(z;r)$. By the path lifting property, $L$ lifts up to a path $\widetilde{L} \subset B^k_{D} (w;r)$ starting at $w$ and ending at some point $s \in B^k_{D} (w;r)$ whereas $L'$ lifts up to a path $\widetilde{L}' \subset B^k_{D} (w;r)$ starting at $w$ and ending at some point $s' \in B^k_{D} (w;r)$. Note that $s' \neq s$ because $B^k_D (w;r)$ is simply connected by Remark \ref{ball}. Since $\pi (s) =\pi (s') =t$, there exists some $\widetilde{\gamma} \in \Gamma \setminus \lbrace \mathrm{Id} \rbrace $ such that $s' = \widetilde{\gamma} (s)$. But then we have 
\begin{align*}
 \rho_{D} (w , \widetilde{\gamma} (w))
 &\leq \rho_{D} (w , \widetilde{\gamma} (s)) + \rho_{D} (\widetilde{\gamma} (s),\widetilde{\gamma} (w))
 \\
 &= \rho_{D} (w , \widetilde{\gamma} (s)) + \rho_{D} (w,s)
 \\
 &<
2r
\\
&\leq 
\min\limits_{ \gamma \in \Gamma \setminus \lbrace \mathrm{Id} \rbrace } \rho_{D} (w, \gamma (w)),
\end{align*}
which is a contradiction. Hence $B^{k}_{X}(z;r)=\pi(B^{k}_{D} (w;r))$ is simply-connected for any  $r \leq \min\limits_{\gamma \in \Gamma \setminus \lbrace \mathrm{Id} \rbrace } \frac{ \rho_D (w,\gamma (w) )}{2}$
and thus  
\begin{align*}
    H^k_X (z) &= \min\limits_{\gamma \in \Gamma \setminus \lbrace \mathrm{Id} \rbrace } \tanh \frac{ \rho_D (w,\gamma (w) )}{4} .
\end{align*}
 
\end{proof}


\section{Some explicit Fridman functions}
\label{sect:ex}

In this section, we will make use of Theorem \ref{thm:fri:main} to do some computations. 

\subsection{Example 1: Fridman functions for an annulus}
\label{ch:fri.Sec:cal:ar}

Let $A_r = \lbrace z  \in \mathbb{C}  \: : \: r<|z|<1 \rbrace$. In our previous paper \cite{ourpaper_squeezing_annulus}, we have proven that the explicit form of $S_{A_r}(z)$ is given by
\[ S_{A_r}(z) = \max \left\lbrace |z|, \frac{r}{|z|} \right\rbrace. \]
As an analog, we will give the explicit expression of $H^d_{A_r} (z)$ for both $d=k$ and $c$.

\begin{theorem}
\label{thm:fri_k_ar}
Fix $r \in (0,1)$. For any $z \in A_r$, we have
\begin{equation*}
    H^k_{A_r} (z) =
\dfrac{ \sqrt{\left(1-\lambda^2 \right)^2 + 4 \lambda^2 \sin^2 \theta (z) } - 2 \lambda \sin \theta (z)}{1-\lambda^2}
\end{equation*}
where 
$\lambda = \exp\left( \frac{\pi^2}{\ln r} \right)$
and 
$ \theta (z)=  \frac{\pi \ln |z| }{\ln r} $.
\end{theorem}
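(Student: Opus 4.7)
The plan is to apply Theorem~\ref{thm:fri:main} with $D=\mathbb{H}$, using an explicit uniformisation of $A_r$ as a quotient of $\mathbb{H}$. Since $\mathbb{H}$ is conformally equivalent to the strip $\{w:0<\mathrm{Im}(w)<\pi\}$ via $\log$, rescaling this strip to width $|\ln r|$ and exponentiating yields a regular covering $\mathbb{H}\to A_r$ given by $w\mapsto\exp\bigl(-\tfrac{i\ln r}{\pi}\log w\bigr)$. Its deck transformation group is $\Gamma=\langle\gamma_0\rangle$ with $\gamma_0(w)=\lambda_0 w$ and $\lambda_0=\exp(-2\pi^2/\ln r)=\lambda^{-2}>1$, which is a torsion-free discrete group of Poincar\'e isometries of $\mathbb{H}$. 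Tracing the covering map shows that if $w=\rho e^{i\phi}\in\mathbb{H}$ lifts $z\in A_r$, then $\phi=\pi\ln|z|/\ln r=\theta(z)\in(0,\pi)$.

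Next I would compute $\rho_\mathbb{H}(w,\lambda_0^n w)$ for $n\in\mathbb{Z}\setminus\{0\}$. The standard formula
\[
\sinh^2\frac{\rho_\mathbb{H}(w_1,w_2)}{2}=\frac{|w_1-w_2|^2}{4\,\mathrm{Im}(w_1)\,\mathrm{Im}(w_2)}
\]
together with $w=\rho e^{i\phi}$ yields
\[
\sinh\frac{\rho_\mathbb{H}(w,\lambda_0^n w)}{2}=\frac{|\lambda_0^{n/2}-\lambda_0^{-n/2}|}{2\sin\phi}.
\]
Since the right-hand side is strictly increasing in $|n|$ (as $\lambda_0>1$), the minimum over $n\neq 0$ is attained at $n=\pm 1$. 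Using $\lambda_0^{1/2}=1/\lambda$ then gives $\sinh(\rho_{\min}/2)=(1-\lambda^2)/(2\lambda\sin\theta(z))$.

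Finally I would convert from $\sinh(\rho_{\min}/2)$ to $\tanh(\rho_{\min}/4)$. Writing $t=\tanh(\rho_{\min}/4)$ and $s=(1-\lambda^2)/(2\lambda\sin\theta(z))$, the identity $\sinh(2u)=2\tanh u/(1-\tanh^2 u)$ produces the quadratic $st^2+2t-s=0$, whose positive root is $t=(\sqrt{1+s^2}-1)/s$. Substituting the value of $s$ and clearing denominators reduces this to the stated expression, and Theorem~\ref{thm:fri:main} then identifies it with $H^k_{A_r}(z)$. The main obstacle is not conceptual but bookkeeping: verifying the minimum is attained at $n=\pm 1$, keeping careful track of the signs arising from $\ln r<0$, and correctly executing the final algebraic simplification to match the form in the statement.
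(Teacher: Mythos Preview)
Your proposal is correct and follows essentially the same route as the paper: the same covering $\Phi:\mathbb{H}\to A_r$, the same cyclic deck group generated by a dilation (you take $\lambda_0=\lambda^{-2}$ where the paper takes $\lambda^{2}$, which is the same group), the same $\sinh$ computation showing the minimum occurs at $n=\pm1$, and the same half-angle identity $\tanh(x/2)=(\sqrt{1+\sinh^2 x}-1)/\sinh x$ to reach the final expression.
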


\begin{proof}
Define $ \lambda = \exp \left( \frac{ \pi^2 }{\ln r} \right) $ and $\gamma: \mathbb{H} \to \mathbb{H}$ such that $ \gamma (w)= \lambda^2 w$ for any $w\in \mathbb{H}$. Let $\Gamma = \langle \gamma \rangle$ be the group generated by $\gamma$. Then, $\Gamma$ is a torsion-free discrete group of isometries of $\mathbb{H}$. Let $\pi: \mathbb{H} \to \mathbb{H}\diagup \Gamma$ be the quotient map. 

Define a function $\Phi : \mathbb{H} \to A_r$ such that $\Phi (w) = \exp \left( i \left( \frac{ - \ln r}{\pi} \right) 	\ln w \right)$ for any $w \in \mathbb{H}$, where we choose the branch of logarithm so that $\mathrm{Im} (\ln (w)) \in (0,2\pi)$.
Then $\Phi (\gamma (w) ) = \Phi (w)$ for any $w \in \mathbb{H}$ and for any $\gamma \in \Gamma$. Hence $\Phi$ descends to a biholomorphic map $\phi : \mathbb{H} \diagup \Gamma \to A_r$ and we have the following commutative diagram
\begin{equation*}
    \begin{tikzpicture}[node distance=4cm, auto]
  \node (H) {$\mathbb{H}$};
  \node (X) [below of=H] {$\mathbb{H} \diagup \Gamma$};
  \node (A) [right of=X] {$A_r$};
  \draw[->] (H) to node {$\Phi$} (A);
  \draw[->] (H) to node [swap] {$\pi$} (X);
  \draw[->] (X) to node [swap] {$\phi$} (A);
\end{tikzpicture}
\end{equation*}
Because the Fridman function is a biholomorphic invariant, we have 
\begin{equation*}
 H^k_{A_r} (z) =  H^k_{\mathbb{H} \diagup \Gamma} ( \zeta)
\end{equation*}
where $\zeta$ is the point in $\mathbb{H} \diagup \Gamma$ such that $\phi (\zeta)= z$. Theorem \ref{thm:fri:main} states that
\begin{equation*}
 H^k_{\mathbb{H} \diagup \Gamma} (\zeta) = \min\limits_{n \in \mathbb{Z}\setminus \{ 0\} } \tanh \frac{ \rho_\mathbb{H} (w,\gamma^n (w) )}{4},
\end{equation*}
for some $w \in \mathbb{H}$ such that $\pi (w) = \zeta$. Then the commutative diagram implies that
\begin{equation*}
 H^k_{A_r} (z) = \min\limits_{n \in \mathbb{Z}\setminus \{ 0\} } \tanh \frac{ \rho_\mathbb{H} (w,\gamma^n (w) )}{4},
\end{equation*}
for some $w \in \mathbb{H}$ such that $\Phi (w) =z$.
Note that for any $w_1,w_2 \in \mathbb{H}$, we have
\begin{equation*}
    \rho_\mathbb{H} ( w_1,w_2)
    = 2 \sinh^{-1} \dfrac{|w_1-w_2|}{2 \sqrt{\mathrm{Im}\left( w_1 \right)\mathrm{Im}\left( w_2 \right)} }
\end{equation*}
(see for example Theorem 7.2.1 of \cite{book:ddg}.)
Write $w=\rho e^{i\theta }$ for some $\rho>0$ and $\theta \in \mathbb{R}$. Since $\gamma (w) =\lambda^2 w$ with $\lambda \in (0,1)$, we get
\begin{align*}
\rho_\mathbb{H} (w,\gamma^n (w) )
&=
 2 \sinh^{-1} \dfrac{|w- \lambda^{2n} w|}{2 \sqrt{\mathrm{Im}\left( w \right)\mathrm{Im} \left( \lambda^{2n} w \right)} }
 \\
&=
 2 \sinh^{-1} 
  \dfrac{(1- \lambda^{2n}) \: \rho }{2 \sqrt{\rho^2 \lambda^{2n} \sin \theta } }
 \\
&=
 2 \sinh^{-1} \left( 
\dfrac{\lambda^{-n} - \lambda^n}{2\sin \theta}
 \right).
\end{align*}
Since $\tanh$ and $\sinh^{-1}$ are increasing functions on $\mathbb{R}$ and the value of $\lambda^{-n} - \lambda^n$ increases as $n$ increases, the minimum of $\tanh \frac{ \rho_\mathbb{H} (w,\gamma^n (w) )}{4}$ is attained when $n=1$. Thus, we have
\begin{equation*} H^k_{A_r} (z) =  \tanh \frac{ \rho_\mathbb{H} (w, \lambda^2 w )}{4}
\end{equation*}
for some $w \in \mathbb{H}$ such that $\Phi (w)=z$. Since $\Phi (w)=z$, we have
\[ 
w = \exp \left( \frac{i \pi \ln z}{\ln r} \right)
\]
for some branches of $\ln$. Then direct calculation yields 
\begin{equation*}
    \rho_\mathbb{H} (w,\gamma (w))
    =
    2 \sinh^{-1} \dfrac{1-\lambda^2 }{2 \lambda \sin \theta (z) }
\end{equation*}
where $ \lambda = \exp \left( \frac{ \pi^2 }{\ln r} \right) $ as defined above and 
$ \theta(z)= \frac{\pi \ln |z| }{\ln r} $. Since  
\begin{equation*}
    \tanh \dfrac{x}{2}
    =
    \dfrac{\sqrt{1+\sinh^2 x}-1}{\sinh x}
\end{equation*}
for any $x \in \mathbb{R}$, the result follows by direct substitution.
 
\end{proof}

By the expression given in Theorem \ref{thm:fri_k_ar}, elementary calculus shows that minimum of $H^k_{A_r}(z)$ is attained when $\theta = \frac{\pi}{2}$, i.e., when $|z|=\sqrt{r}$. We have the following corollary.

\begin{corollary}
\[ \iota_{A_r} = \dfrac{1-\lambda}{1+\lambda} \]
where 
$\lambda = \exp\left( \frac{\pi^2}{\ln r} \right)$.
\end{corollary}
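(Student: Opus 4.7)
The plan is to combine Theorem \ref{thm:fri<iota}(3), which gives $\iota_{A_r}^k(z) = H_{A_r}^k(z)$ pointwise, with the explicit formula for $H_{A_r}^k(z)$ from Theorem \ref{thm:fri_k_ar}, and then minimize over $z\in A_r$. By definition $\iota_{A_r} = \inf_{z\in A_r}\iota_{A_r}^k(z)$, so the corollary reduces to computing $\inf_{z\in A_r} H_{A_r}^k(z)$.

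Observe that the formula for $H_{A_r}^k(z)$ depends on $z$ only through $s := \sin\theta(z)$ with $\theta(z) = \pi\ln|z|/\ln r$. As $|z|$ ranges over $(r,1)$ and $\ln r<0$, the angle $\theta(z)$ ranges over $(0,\pi)$, so $s$ ranges over $(0,1]$, attaining $1$ exactly at $|z|=\sqrt{r}$. Thus I would set
\[
f(s) \;=\; \frac{\sqrt{(1-\lambda^2)^2 + 4\lambda^2 s^2}\;-\;2\lambda s}{1-\lambda^2}, \qquad s\in(0,1],
\]
and show $f$ is strictly decreasing on $(0,1]$, so that its infimum is $f(1)$.

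The monotonicity is a routine one-line derivative check: $f'(s)<0$ is equivalent to $\dfrac{2\lambda s}{\sqrt{(1-\lambda^2)^2+4\lambda^2 s^2}}<1$, i.e.\ $(1-\lambda^2)^2>0$, which holds since $\lambda\in(0,1)$. (Here $\lambda = \exp(\pi^2/\ln r)\in(0,1)$ because $\ln r<0$.) This is the only real computational step and I expect it to be the main—though very minor—obstacle, since one has to be slightly careful about the direction of the inequality to confirm that the minimum, not a maximum, occurs at $s=1$.

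Finally, I would evaluate $f(1)$ directly: the radicand $(1-\lambda^2)^2 + 4\lambda^2$ simplifies to $(1+\lambda^2)^2$, so
\[
f(1) \;=\; \frac{(1+\lambda^2)-2\lambda}{1-\lambda^2} \;=\; \frac{(1-\lambda)^2}{(1-\lambda)(1+\lambda)} \;=\; \frac{1-\lambda}{1+\lambda},
\]
which yields $\iota_{A_r}=\tfrac{1-\lambda}{1+\lambda}$ as claimed. The geometric interpretation is that the injectivity radius is attained on the core circle $|z|=\sqrt{r}$ of the annulus, consistent with the symmetry of $A_r$ under $z\mapsto r/z$.
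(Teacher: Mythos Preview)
Your proposal is correct and follows essentially the same approach as the paper: the paper simply remarks, just before the corollary, that ``elementary calculus shows that the minimum of $H^k_{A_r}(z)$ is attained when $\theta=\pi/2$, i.e., when $|z|=\sqrt{r}$,'' and then states the corollary without further argument. Your write-up fills in exactly those details---the monotonicity in $s=\sin\theta$ and the evaluation at $s=1$---and invokes Theorem~\ref{thm:fri<iota}(3) to pass from $H^k_{A_r}$ to $\iota^k_{A_r}$, which is the implicit link the paper is using.
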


\begin{remark}
This can also be obtained by a special case of Theorem 2.3 of Sugawa \cite{inject2}.
\end{remark}

In view of inequality (\ref{eq:fri_order}), we would also like to determine the explicit form of $H^c_{A_r}(z)$. We first recall that 
\[ c_{A_r}(z_1,z_2) := \sup \{ \rho_\mathbb{D} (f(z_1),f(z_2)) \: : \: f \in \mathcal{O} (A_r , \mathbb{D}) \}. \] 
By Grunsky \cites{grunsky1,grunsky2} and Ahlfors \cite{paper_ahlfor_cara}, the maximizing function $f$ is a ramified double cover of $\mathbb{D}$, unique up to postcompositing a rotation. Then Simha \cite{paper:Simha} gives an explicit formula for this maximizing function and hence the Carath\'{e}odory metric of the annulus in the complex plane. For points $z_1,z_2 \in A_r$ with $z_1>0$ and $z_2<0$, he showed in \cite{paper:Simha} that
\[
\tanh \dfrac{1}{2} c_{A_r}(z_1,z_2)
=F_{z_1}(z_2)
\]
where
\begin{equation}
\label{eq:fri_cara1}
F_{z_1}(z_2)
=
\left( \dfrac{-r}{z_2} \right) \left( 1- \dfrac{z_2}{z_1} \right) \left( 1- \dfrac{z_1z_2}{r} \right)
Q(z_1,z_2,r)
\end{equation}
and
\begin{align*}
    & Q(z_1,z_2,r) \\
= &
\prod\limits_{n=1}^{\infty}
\dfrac{(1-z_1^{-1}z_2 r^{2n})(1-z_1z_2^{-1} r^{2n})(1-z_1z_2 r^{2n-1})(1-z_1^{-1}z_2^{-1} r^{2n+1})}{(1-z_1^{-1}z_2 r^{2n-1})(1-z_1 z_2^{-1} r^{2n-1})(1-z_1 z_2 r^{2n-2})(1-z_1^{-1}z_2^{-1}r^{2n})}
\end{align*}
is a ramified double cover of $\mathbb{D}$ with zeros $z_1$ and $\frac{r}{z_1}$.

\begin{remark}
Let $\omega (z,y)$ be the Schottky-klein prime function on annulus $A_r$, which can be expressed as 
\begin{equation*}
\label{eq:skpf_ar}
    \omega (z, \zeta)
=
(z- \zeta) \prod\limits_{n=1}^\infty \dfrac{(r^{2n}z-\zeta )(r^{2n} \zeta -z )}{((r^{2n}z-z )(r^{2n} \zeta -\zeta )}.
\end{equation*}
Then we can write 
\begin{equation}
\label{eq:fri_cara2}
F_{z_1}(z_2)
=
\dfrac{1}{ r z_1} \left( \dfrac{\omega \left( z_1 , z_2 \right) \omega \left( z_1 , r z_2^{-1} \right) }{ \omega \left( z_1 , z_2^{-1} \right) \omega \left( z_1 ,z_2 r^{-1} \right) } \right).
\end{equation}
For more information about the Schottky-klein prime function, see for example \cite{book_crowdy2020}. 
\end{remark}

The following lemma is a consequence of the formulae for $c_{A_r}$.
\begin{lemma} 
\label{lem:fri:cara_min} 
Fix any $r \in (0,1)$. For any $r<z_1<1$ and $-1<z_2<-r$, we have the following.
\begin{enumerate}
    \item $c_{A_r}(z_1 , z_2 ) = c_{A_r} \left(z_1 , r z_2^{-1}\right).$
    \item Suppose that $z_1$ fixed and take $c_{A_r} (z_1 , z_2 )$ as a function of $z_2$ as $z_2$ varies in $(-1,-r)$. Then the minimum value of $c_{A_r} (z_1 , z_2 )$ is attained when $z_2=-\sqrt{r}$ (independent to $z_1$). 
\end{enumerate}
\end{lemma}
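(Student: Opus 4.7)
The plan is to reduce both parts to properties of $F_{z_1}(z_2)$ appearing in Simha's formula, exploiting the symmetric role played by the four $\omega$-factors in expression (\ref{eq:fri_cara2}).

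For part 1, I would substitute $z_2 \mapsto rz_2^{-1}$ directly into (\ref{eq:fri_cara2}). The involution $z_2 \mapsto rz_2^{-1}$ interchanges the pair $\{\omega(z_1,z_2),\omega(z_1,rz_2^{-1})\}$ in the numerator and the pair $\{\omega(z_1,z_2^{-1}),\omega(z_1,z_2r^{-1})\}$ in the denominator (since $r(rz_2^{-1})^{-1}=z_2$, $(rz_2^{-1})^{-1}=z_2/r$, and $(rz_2^{-1})/r=z_2^{-1}$), while leaving the prefactor $1/(rz_1)$ fixed. Hence $F_{z_1}(rz_2^{-1})=F_{z_1}(z_2)$, and applying $2\tanh^{-1}$ yields the stated equality of Carath\'eodory distances.

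For part 2, the strategy is to identify $-\sqrt{r}$ as the unique critical point of $z_2 \mapsto F_{z_1}(z_2)$ on the arc $(-1,-r)$, and then to deduce from boundary behaviour that it is a global minimum. Part 1 furnishes the invariance $F_{z_1}=F_{z_1}\circ \sigma$ for the involution $\sigma(z_2)=rz_2^{-1}$, whose only fixed point in $(-1,-r)$ is $-\sqrt{r}$. Since $\sigma'(-\sqrt{r})=-1$, differentiating this relation at $-\sqrt{r}$ forces $F_{z_1}'(-\sqrt{r})=0$, so $-\sqrt{r}$ is a branch point of the Ahlfors map $F_{z_1}:A_r\to\mathbb{D}$.

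The hard part will be ruling out any other critical point of $F_{z_1}$ in $A_r$ (and in particular on $(-1,-r)$). For this I would invoke Riemann--Hurwitz: since $F_{z_1}$ is a proper degree-$2$ branched cover of $\mathbb{D}$ by Grunsky--Ahlfors, compactifying with $\chi(\overline{A_r})=0$ and $\chi(\overline{\mathbb{D}})=1$ gives total ramification exactly $2$. By (\ref{eq:fri_cara1}), $F_{z_1}$ has simple zeros at $z_1$ and $r/z_1$, both located in the real arc $(r,1)$; since $F_{z_1}$ is real-valued there, Rolle's theorem produces a critical point strictly between them. That consumes one of the two branch points, so the other must be the one already found at $-\sqrt{r}$, and $F_{z_1}$ admits no further critical points in $A_r$. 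To close the argument, I would use that $A_r$ is bounded, so $F_{z_1}(z_2)\to 1$ (equivalently $c_{A_r}(z_1,z_2)\to\infty$) as $z_2\to -1$ or $z_2\to -r$, while $F_{z_1}<1$ in the interior; combined with the uniqueness of the interior critical point, the minimum on $(-1,-r)$ is attained precisely at $z_2=-\sqrt{r}$, independent of $z_1$.
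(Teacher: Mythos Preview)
Your part~1 is the paper's argument (direct substitution), just carried out in the prime-function form (\ref{eq:fri_cara2}) rather than (\ref{eq:fri_cara1}).

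Your part~2 is correct but takes a genuinely different route. The paper argues by contradiction without ever locating the critical points: if the minimum were attained at some $\zeta\neq -\sqrt{r}$, say $\zeta\in(-\sqrt{r},-r)$ by part~1, then since $c_{A_r}(z_1,z_2)\to\infty$ as $z_2\to -r$, the intermediate value theorem produces $y\in(\zeta,-r)$ with $F_{z_1}(y)=F_{z_1}(-\sqrt{r})$; together with $r/y$ (by part~1) this gives three distinct preimages of one value, contradicting degree~$2$. This is strictly more elementary than your approach, using only continuity and the degree, and it sidesteps any discussion of boundary behaviour of the branched cover or Euler characteristics. Your Riemann--Hurwitz/Rolle argument, by contrast, is more structural: it actually pins down both branch points of $F_{z_1}$ (one at $-\sqrt{r}$, one in the real arc between the two zeros), which is extra information the paper does not extract. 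One small gap to patch in your write-up: when $z_1=\sqrt{r}$ the two zeros $z_1$ and $r/z_1$ coalesce into a double zero, so Rolle is unavailable, but then $\sqrt{r}$ is itself the second branch point and the count goes through unchanged.
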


\begin{proof}
Replacing $z_2$ by $rz_2^{-1}$ in equation (\ref{eq:fri_cara1}), a direct calculation yields part 1 of the lemma. 

We now prove part 2 of the lemma. Suppose to the contrary that the minimum is attained at some point $\zeta \neq -\sqrt{r}$. By part 1 of this lemma, we may assume without loss of generality that $\zeta \in ( - \sqrt{r} ,-r)$. Note that $c_{A_r}(z_1 , z_2 )$ tends to infinity when $z_2$ approaches $-r$. Since the Carath\'{e}odory distance is continuous, by the intermediate value theorem there exists some point $y \in ( \zeta ,-r)$ such that $c_{A_r} (z_1 ,y)=c_{A_r} (z_1,-\sqrt{r})$. Then part 1 of this lemma implies that $y,\sqrt{r}$ and $\frac{r}{y}$ are three distinct points in $A_r$ such that 
\[
f_{z_1} (y)=f_{z_1} (-\sqrt{r} )=f_{z_1} \left( \frac{r}{y} \right)
\]
This contradicts with the fact that $f_{z_1}$ is a double cover of $\mathbb{D}$. Thus the minimum value of $c(z_1,z_2)$, as a function of $z_2 \in (-1,-r)$ with $z_1$ fixed, is attained for $z_2=-\sqrt{r}$. 
 
\end{proof}

Now we are ready to give the precise formula for $H^c_{A_r}(z)$ for any $r \in (0,1)$.

\begin{theorem}
\label{thm:fri_c_ar}
For any $z\in A_r$, we have
\begin{align*}
H^c_{A_r}(z)
& =
\sqrt{r} \left( 1 + \frac{\sqrt{r}}{|z|} \right) \left( 1+ \dfrac{|z|}{\sqrt{r}} \right)
\left( 
\prod\limits_{n=1}^{\infty}
\dfrac{(1+|z| r^{2n-1/2})( |z| + r^{2n+1/2})}{(1+|z| r^{2n-3/2})(|z | + r^{2n-1/2})}
\right)^2 
.
\end{align*}
\end{theorem}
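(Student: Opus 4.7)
My plan is to establish $H^c_{A_r}(z) = \tanh(c_{A_r}(|z|, -\sqrt{r})/2)$; the explicit formula in the statement then follows by substituting $z_1 = |z|$ and $z_2 = -\sqrt{r}$ into Simha's expression \eqref{eq:fri_cara1} and simplifying the resulting Schottky--Klein-type product. Since $w \mapsto e^{i\theta} w$ is an isometry of $c_{A_r}$, both $H^c_{A_r}(z)$ and the claimed formula depend only on $|z|$, so I may assume $z = |z| > 0$ is real.

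For the lower bound $H^c_{A_r}(z) \geq \tanh(c_{A_r}(z, -\sqrt{r})/2)$, fix any $R < c_{A_r}(z, -\sqrt{r})$. Lemma \ref{lem:fri:cara_min}(2) yields $c_{A_r}(z, w) \geq c_{A_r}(z, -\sqrt{r}) > R$ for every $w \in (-1, -r)$, so $B^c_{A_r}(z; R) \cap [-1, -r] = \emptyset$ and thus $B^c_{A_r}(z; R)$ lies in the slit annulus $A_r \setminus [-1, -r]$, which is simply connected. The Riemann mapping theorem supplies an $f \in \mathcal{U}(\mathbb{D}, A_r)$ with $f(\mathbb{D}) = A_r \setminus [-1, -r] \supset B^c_{A_r}(z; R)$, giving $H^c_{A_r}(z) \geq \tanh(R/2)$; letting $R \to c_{A_r}(z, -\sqrt{r})^-$ completes the bound.

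For the reverse inequality, suppose to the contrary that $B^c_{A_r}(z; R) \subset f(\mathbb{D})$ for some $f \in \mathcal{U}(\mathbb{D}, A_r)$ and some $R > c_{A_r}(z, -\sqrt{r})$. Since $f$ is an injective holomorphic map on a complex one-dimensional disk, $f(\mathbb{D})$ is a simply connected subdomain of $A_r$, so the inclusion $B^c_{A_r}(z; R) \hookrightarrow f(\mathbb{D}) \hookrightarrow A_r$ induces the zero map on $\pi_1$. I contradict this by exhibiting a simple closed curve $\Gamma \subset B^c_{A_r}(z; R)$ that generates $\pi_1(A_r) \cong \mathbb{Z}$. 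Because $w \mapsto \bar{w}$ is an anti-isometry of $c_{A_r}$ fixing $z$, the ball is symmetric about the real axis; by construction it contains $z > 0$, and it contains an open neighborhood of $-\sqrt{r}$ because $c_{A_r}(z, -\sqrt{r}) < R$. Concatenating a path in the ball from $z$ to $-\sqrt{r}$ through the upper half-plane with its complex conjugate through the lower half-plane produces such a $\Gamma$, and a winding-number argument along a vertical ray from the origin shows that $0$ lies in the bounded component of $\mathbb{C} \setminus \Gamma$, so $\Gamma$ encloses the inner disk $\{|w| \leq r\}$ and represents a generator of $\pi_1(A_r)$.

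The central obstacle is guaranteeing that the required upper-half-plane path from $z$ to $-\sqrt{r}$ actually lies inside $B^c_{A_r}(z; R)$, since Carath\'{e}odory balls on the annulus can in principle be disconnected. I would address this via the Simha extremal function $f_z$, a branched double cover of $\mathbb{D}$ ramified precisely at the fixed points $\pm \sqrt{r}$ of the deck involution $w \mapsto r/w$, which realizes $c_{A_r}$ on a neighborhood of the negative real segment. A Riemann--Hurwitz/Euler characteristic computation applied to the sublevel sets $f_z^{-1}(\mathbb{D}(0; \tanh(R/2)))$ identifies $R = c_{A_r}(z, -\sqrt{r})$ as exactly the radius at which the topology of these sublevel sets transitions from simply connected to topologically annular, supplying the required path. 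Once the equality $H^c_{A_r}(z) = \tanh(c_{A_r}(z, -\sqrt{r})/2)$ is in hand, substituting into \eqref{eq:fri_cara1} and rearranging the resulting infinite product yields the stated formula.
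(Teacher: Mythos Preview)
Your overall strategy coincides with the paper's: reduce to real $z>0$, prove $H^c_{A_r}(z)=\tanh\bigl(\tfrac{1}{2}c_{A_r}(z,-\sqrt{r})\bigr)$ by matching upper and lower bounds, and then substitute into Simha's formula \eqref{eq:fri_cara1}. The lower bound via the slit annulus, and the upper-bound setup (assume $B^c_{A_r}(z;R)\subset f(\mathbb{D})$ and build a non-null-homotopic loop by reflecting an upper-half-plane path across the real axis), are essentially identical to what the paper does.

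The one substantive divergence is how you obtain the upper-half-plane path from $z$ to $-\sqrt{r}$ inside the ball. The paper disposes of this in one line by citing the known fact that Carath\'{e}odory balls in planar domains are connected; path-connectedness then follows, and the continuous retraction sending $w$ with $\operatorname{Im}w<0$ to $\bar w$ shows that $B^c_{A_r}(z;R)\cap\overline{\mathbb{H}}$ is itself path-connected. Your Riemann--Hurwitz analysis of the sublevel sets $f_z^{-1}\bigl(\mathbb{D}(0;\tanh(R/2))\bigr)$ is an attractive idea, but as stated it has a direction-of-inclusion problem: since $c_{A_r}(z,w)\ge\rho_{\mathbb{D}}\bigl(0,f_z(w)\bigr)$ for every $w$, the Carath\'{e}odory ball is \emph{contained in} the sublevel set, so knowing that the sublevel set becomes annular past the critical value $|f_z(-\sqrt r)|$ does not by itself put a non-contractible loop inside the possibly smaller Carath\'{e}odory ball. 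Your remark that $f_z$ realizes $c_{A_r}$ near the negative segment does not bridge this gap either, because the path you need runs all the way from the positive axis. Your route can be rescued by first establishing that $f_z$ is the \emph{global} extremal for $c_{A_r}(z,\cdot)$, so that ball and sublevel set coincide; after that your Riemann--Hurwitz picture becomes a genuinely pleasant alternative proof. But that is additional work beyond what Simha's formula as quoted provides, and the paper's connectedness citation is the more economical fix.
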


\begin{remark}
\label{rmk:fri_c_ar}
Using equation (\ref{eq:fri_cara2}), we can write 
\begin{equation}
\label{Rmk:HcAR}
H^c_{A_r}(z)
=
\dfrac{1}{ r |z| } \left( \dfrac{\omega \left( |z| , -r^{1/2} \right)}{ \omega \left( |z| , -r^{-1/2} \right)} \right)^2.
\end{equation}
\end{remark}

\begin{proof}
Pre-composing rotation if necessary, we can assume $z>0$. Let 
\begin{equation*}
\rho^* = \inf\limits_{\zeta \in (-1 ,r)}  c_{A_r} (z,\zeta).
\end{equation*}
By Lemma \ref{lem:fri:cara_min}, this is attained when $\zeta=-\sqrt{r}$. Hence,
\begin{equation*}
\rho^* = c_{A_{r}}(z, -\sqrt{r}).
\end{equation*}
Then for any $\rho >0$ such that $\rho < \rho^*$, we have 
$ B^c_{A_r} (z ;\rho ) \subset A_r \setminus (-1, -r)$. Since $A_r \setminus (-1,-r)$ is simply connected, by the Riemann mapping theorem, there exists a biholomorphic map $f: \mathbb{D} \to A_r \setminus (-1, -r)$. Hence, by definition, 
\begin{equation*}
H^c_{A_r} (z) \geq \tanh \dfrac{1}{2} \rho^*
\end{equation*}

It suffices to show that $H^c_{A_r} (z) \leq \tanh \dfrac{1}{2} \rho^*$, or equivalently, for any $\rho > \rho^*$, $B^c_{A_r} (z;\rho) \not\subset f(\mathbb{D})$ for any $f \in \mathcal{U}(\mathbb{D},A_r)$. Suppose to the contrary that there exists $\rho > \rho^*$ such that $B^c_{A_r} (z;\rho) \subset f(\mathbb{D})$ for some $f \in \mathcal{U}(\mathbb{D},A_r)$. By construction of $\rho^*$, there exists $\zeta \in (-1,-r)$ such that $\zeta \in B^c_{A_r} (z; \rho)$. By \cite{c_con}, $B^c_{A_r} (z ;\rho)$ is connected. Thus, $B^c_{A_r} (z ; \rho) \subset \mathbb{C}$ is path-connected. Hence, there exists a path $\gamma_1$ in $B^c_{A_r} (z; \rho)$ connecting $z$ and $\zeta$. By reflection symmetry of $A_r$, and hence $B^c_{A_r} (z; \rho)$, we can assume $\gamma_1$ lies on the closed upper half-plane. Its reflection $\gamma_2$ along the real axis, is a path in $B^c_{A_r} (z; \rho)$, lying on the closed lower half-plane, connecting $z$ and $\zeta$. Then $\gamma_1$ and $\gamma_2$ together induce a closed curve $\gamma$ in $B^c_{A_r} (z; \rho)$. But then $\gamma$ is a closed curve in $f(\mathbb{D})$. Note that $\gamma$ is not null-homotopic in $A_r$. This is a contradiction because $f(\mathbb{D})$ is simply-connected. Consequently, we must have $H^c_{A_r} (z) \leq \tanh \dfrac{1}{2} \rho^*$. Substituting $\zeta=-\sqrt{r}$ into equation (\ref{eq:fri_cara1}), the result follows. 
 
\end{proof}

\begin{figure}
    \centering
    \includegraphics[height=10cm]{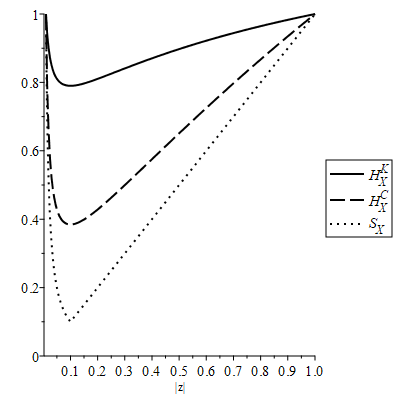}
    \caption{Plots of $H^k_{A_{0.01}}$ (solid), $H^c_{A_{0.01}}$ (dash) and $S_{A_{0.01}}$ (dot).}
\label{fig:plot}
\end{figure}

 If we take $r=0.01$ in Theorem \ref{thm:fri_k_ar} and Theorem \ref{thm:fri_c_ar}, we know that inequalities in (\ref{eq:fri_order}) may be strict (see Figure \ref{fig:plot}). 

\subsection{Example 2: Fridman functions for the punctured disk}
\label{ch:fri.Sec:cal:disk}

Denote $ \mathbb{D^*}=\{ z \in \mathbb{C} \: : \: 0<|z|<1 \} $ 
the punctured unit disk in $\mathbb{C}$. Similar to Section \ref{ch:fri.Sec:cal:ar}, we have the following theorems. 
 
\begin{theorem}
\label{thm:fri_k_disk} 
For any $z \in \mathbb{D}^*$, we have
\begin{equation*}
    H^k_{\mathbb{D}^*} (z) = 
    \dfrac{\log |z| + \sqrt{ (\log|z|)^2 + \pi^2} }{\pi}.
\end{equation*}
\end{theorem}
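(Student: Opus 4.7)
My plan is to follow the same strategy used for the annulus in Theorem \ref{thm:fri_k_ar}, but now realizing $\mathbb{D}^*$ as a quotient of $\mathbb{H}$ by a parabolic cyclic group. Specifically, define $\Phi : \mathbb{H} \to \mathbb{D}^*$ by $\Phi(w) = e^{2\pi i w}$ and let $\gamma(w) = w+1$, $\Gamma = \langle \gamma \rangle$. Then $\Gamma$ is a torsion-free discrete group of isometries of $(\mathbb{H}, \rho_{\mathbb{H}})$, $\Phi$ descends to a biholomorphism $\phi : \mathbb{H}/\Gamma \to \mathbb{D}^*$, and biholomorphic invariance of the Fridman function gives $H^k_{\mathbb{D}^*}(z) = H^k_{\mathbb{H}/\Gamma}(\phi^{-1}(z))$.

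Next I would apply Theorem \ref{thm:fri:main} to obtain
\[
H^k_{\mathbb{D}^*}(z) = \min_{n \in \mathbb{Z}\setminus\{0\}} \tanh \frac{\rho_{\mathbb{H}}(w, w+n)}{4}
\]
for any $w \in \mathbb{H}$ with $\Phi(w) = z$. Using the standard distance formula $\rho_{\mathbb{H}}(w_1, w_2) = 2\sinh^{-1}\frac{|w_1 - w_2|}{2\sqrt{\mathrm{Im}(w_1)\mathrm{Im}(w_2)}}$ (as in the proof of Theorem \ref{thm:fri_k_ar}), I compute
\[
\rho_{\mathbb{H}}(w, w+n) = 2\sinh^{-1}\frac{|n|}{2\,\mathrm{Im}(w)},
\]
which is strictly increasing in $|n|$, so the minimum is attained at $n = \pm 1$ and equals $2\sinh^{-1}\frac{1}{2\,\mathrm{Im}(w)}$.

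The final step is a substitution-and-simplification. From $\Phi(w) = e^{2\pi i w} = z$ I get $\mathrm{Im}(w) = -\frac{\log|z|}{2\pi}$ (positive since $|z|<1$), so $\frac{1}{2\,\mathrm{Im}(w)} = -\frac{\pi}{\log|z|}$. Applying the identity $\tanh\frac{x}{2} = \frac{\sqrt{1+\sinh^2 x}-1}{\sinh x}$ with $\sinh x = -\pi/\log|z|$ yields
\[
H^k_{\mathbb{D}^*}(z) = \frac{\sqrt{(\log|z|)^2 + \pi^2} - |\log|z||}{|\log|z||}\cdot\frac{-\log|z|}{\pi} = \frac{\log|z| + \sqrt{(\log|z|)^2 + \pi^2}}{\pi},
\]
after using $|\log|z|| = -\log|z|$ and simplifying; this is the claimed formula.

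No step looks genuinely hard: the uniformization $\Phi(w) = e^{2\pi i w}$ and the covering group $\Gamma = \langle w \mapsto w+1\rangle$ are standard, and Theorem \ref{thm:fri:main} applies directly since $\mathbb{H}$ is convex with no complex affine lines and $\Gamma$ is torsion-free discrete. The only mild care needed is tracking signs in the last simplification (because $\log|z|<0$), and checking that the minimum over $n$ is indeed attained at $|n|=1$, which is immediate from monotonicity of $\sinh^{-1}$.
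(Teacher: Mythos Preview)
Your proposal is correct and follows essentially the same argument as the paper: the paper uses the covering $\Phi(w)=e^{iw}$ with generator $\gamma(w)=w+2\pi$, while you use $\Phi(w)=e^{2\pi i w}$ with $\gamma(w)=w+1$, which differ only by the hyperbolic isometry $w\mapsto 2\pi w$. Both then apply Theorem~\ref{thm:fri:main}, observe the minimum over $n$ occurs at $|n|=1$ by monotonicity, and finish with the same $\tanh(x/2)$ identity.
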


\begin{proof}
Define $\gamma: \mathbb{H} \to \mathbb{H}$ by $ \gamma (w)= w+ 2\pi$ for any $w\in \mathbb{H}$. Let $\Gamma = \langle \gamma \rangle$ be the group generated by $\gamma$. Then, $\Gamma$ is a torsion-free discrete group of isometries of $\mathbb{H}$. Let $\pi: \mathbb{H} \to \mathbb{H}\diagup \Gamma$ be the quotient map. 
Define a function $\Phi : \mathbb{H} \to \mathbb{D}^*$ such that $\Phi (w) = e^{iw}$ for any $w \in \mathbb{H}$. Then $\Phi (\gamma (w) ) = \Phi (w)$ for any $w \in \mathbb{H}$ and for any $\gamma \in \Gamma$. Hence, $\Phi$ descends to a biholomorphic map $\phi : \mathbb{H} \diagup \Gamma \to \mathbb{D}^*$ and we have the following commutative diagram
\begin{equation*}
    \begin{tikzpicture}[node distance=4cm, auto]
  \node (H) {$\mathbb{H}$};
  \node (X) [below of=H] {$\mathbb{H} \diagup \Gamma$};
  \node (A) [right of=X] {$\mathbb{D}^*$};
  \draw[->] (H) to node {$\Phi$} (A);
  \draw[->] (H) to node [swap] {$\pi$} (X);
  \draw[->] (X) to node [swap] {$\phi$} (A);
\end{tikzpicture}
\end{equation*}

Because the Fridman function is a biholomorphic invariant, we have 
\begin{equation*}
 H^k_{\mathbb{D}^*} (z) =  H^k_{\mathbb{H} \diagup \Gamma} ( \zeta)
\end{equation*}
where $\zeta$ is the point in $\mathbb{H} \diagup \Gamma$ such that $\phi (\zeta)= z$. Theorem \ref{thm:fri:main} states that
\begin{equation*}
 H^k_{\mathbb{H} \diagup \Gamma} (\zeta) = \min\limits_{n \in \mathbb{Z}\setminus \{ 0\} } \tanh \frac{ \rho_\mathbb{H} (w,\gamma^n (w) )}{4},
\end{equation*}
for some $w \in \mathbb{H}$ such that $\pi (w) = \zeta$. Then the commutative diagram implies that
\begin{equation*}
 H^k_{\mathbb{D}^*} (z) = \min\limits_{n \in \mathbb{Z}\setminus \{ 0\} } \tanh \frac{ \rho_\mathbb{H} (w,\gamma^n (w) )}{4},
\end{equation*}
for some $w$ such that $w = \Phi (z)$. Note that for any $w_1,w_2 \in \mathbb{H}$ with $\mathrm{Im} (w_1) = \mathrm{Im} (w_2)$, we have
\begin{equation*}
    \rho_\mathbb{H} ( w_1,w_2)
    = 2 \sinh^{-1} \dfrac{\left| \mathrm{Re}(w_1)- \mathrm{Re}(w_2) \right|}{2\mathrm{Im} (w_1)  }.
\end{equation*}
(see Theorem 7.2.1 of \cite{book:ddg}). Write $w=x+iy$ for some $x \in \mathbb{R}$ and $y >0$. Since $\gamma (w) = w +2\pi$ we get
\begin{equation*}
    \rho_\mathbb{H} ( w, \gamma^n (w) )
    = 2 \sinh^{-1} \dfrac{n \pi}{y }.
\end{equation*}
Because $\tanh$ and $\sinh^{-1}$ are increasing functions on $\mathbb{R}$, 
the minimum of $\tanh \frac{ \rho_\mathbb{H} (w,\gamma^n (w) )}{4}$ is attained when $n=1$. Thus, we have
\begin{equation*} H^k_{\mathbb{D}^*} (z) =  \tanh \frac{ \rho_\mathbb{H} (w,w+2\pi )}{4}
\end{equation*}
for some $w \in \mathbb{H}$ such that $\Phi (w)=z$. Since $\Phi (w)=z$, we have $w =  - i \ln z $
for some branches of $\ln$. As
\begin{equation*}
    \tanh \dfrac{x}{2}
    =
    \dfrac{\sqrt{1+\sinh^2 x}-1}{\sinh x}
\end{equation*}
for any $x \in \mathbb{R}$, the result follows by direction substitution.
 
\end{proof}

\begin{corollary}
\[ \iota_{\mathbb{D}^*} = 0 \]
\end{corollary}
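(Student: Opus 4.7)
The plan is to combine the explicit formula from Theorem \ref{thm:fri_k_disk} with part 3 of Theorem \ref{thm:fri<iota} (which gives $H^k_X(z) = \iota^k_X(z)$ in dimension one) and then just take a limit toward the puncture. So first I would note that
\[
\iota_{\mathbb{D}^*} = \inf_{z \in \mathbb{D}^*} \iota^k_{\mathbb{D}^*}(z) = \inf_{z \in \mathbb{D}^*} H^k_{\mathbb{D}^*}(z),
\]
and that $H^k_{\mathbb{D}^*}(z) \geq 0$ by definition, so it suffices to exhibit a sequence of points along which $H^k_{\mathbb{D}^*}$ tends to $0$.

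Next I would take $|z| \to 0$ in the formula
\[
H^k_{\mathbb{D}^*}(z) = \frac{\log|z| + \sqrt{(\log|z|)^2 + \pi^2}}{\pi}.
\]
Writing $x = \log|z| \to -\infty$, we have $\sqrt{x^2 + \pi^2} = -x\sqrt{1 + \pi^2/x^2} = -x + \frac{\pi^2}{-2x} + O(|x|^{-3})$, so $x + \sqrt{x^2+\pi^2} = \frac{\pi^2}{-2x} + O(|x|^{-3}) \to 0^+$. Hence $H^k_{\mathbb{D}^*}(z) \to 0$ as $|z| \to 0$, which forces $\iota_{\mathbb{D}^*} = 0$.

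There is essentially no obstacle here: once Theorem \ref{thm:fri_k_disk} is in hand, the corollary is a one-line asymptotic computation together with the identification $H^k = \iota^k$ in dimension one. The only thing to double-check is that one may indeed read off $\iota_{\mathbb{D}^*}$ as an infimum over $\mathbb{D}^*$ of the pointwise function, which is exactly the definition given in the introduction.
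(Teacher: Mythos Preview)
Your proof is correct and essentially the same as the paper's. The paper argues via monotonicity (elementary calculus shows $H^k_{\mathbb{D}^*}(z)$ is strictly increasing in $|z|$, so the infimum equals $\lim_{|z|\to 0} H^k_{\mathbb{D}^*}(z)=0$), while you instead use nonnegativity together with the asymptotic $x+\sqrt{x^2+\pi^2}\to 0$ as $x\to -\infty$; both routes rest on the identification $\iota^k_{\mathbb{D}^*}(z)=H^k_{\mathbb{D}^*}(z)$ from part~3 of Theorem~\ref{thm:fri<iota} and the explicit formula of Theorem~\ref{thm:fri_k_disk}.
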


\begin{remark}
This can also be obtained by a special case of Theorem 2.3 of \cite{inject2}
\end{remark}

\begin{proof}
Elementary calculus shows that $H^k_{\mathbb{D}^*} (z)$ is strictly increasing when $|z|$ increases. Hence 
$ \iota_{\mathbb{D}^*}
=
\lim\limits_{|z| \to 0}
H^k_{\mathbb{D}^*} (z)
= 0. $
 
\end{proof}

\begin{remark}
The formula we obtained for $H^k_{\mathbb{D}^*}(z)$ verifies Lemma 2.2 of \cite{mahajan2019comparison}.
\end{remark}

\begin{theorem}
\label{theorem}
\label{thm:fri_c_disk}
For any $z \in \mathbb{D}^*$, we have
\[
H^c_{\mathbb{D}^*}(z)= |z|.
\]
\end{theorem}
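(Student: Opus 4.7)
The plan is to combine a direct identification of the Carath\'{e}odory distance on $\mathbb{D}^*$ with the general inequality of Theorem \ref{thm:fri<iota} and a topological obstruction argument about loops around the puncture.

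First, by the Riemann removable singularity theorem, every holomorphic map $\mathbb{D}^* \to \mathbb{D}$ extends uniquely to a holomorphic map $\mathbb{D} \to \mathbb{D}$. Consequently, for any $z_1,z_2 \in \mathbb{D}^*$, one has $c_{\mathbb{D}^*}(z_1,z_2) = c_{\mathbb{D}}(z_1,z_2) = \rho_{\mathbb{D}}(z_1,z_2)$, so that $B^c_{\mathbb{D}^*}(z;r) = B^c_{\mathbb{D}}(z;r) \setminus \{0\}$. The set $B^c_{\mathbb{D}}(z;r)$ is a Euclidean disk contained in $\mathbb{D}$, and it contains $0$ precisely when $r > \rho_{\mathbb{D}}(z,0) = 2\tanh^{-1}|z|$. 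Therefore $B^c_{\mathbb{D}^*}(z;r)$ is simply connected if and only if $r \leq 2\tanh^{-1}|z|$, which yields $\iota^c_{\mathbb{D}^*}(z) = \tanh(\tanh^{-1}|z|) = |z|$. The lower bound $H^c_{\mathbb{D}^*}(z) \geq |z|$ then follows immediately from part 1 of Theorem \ref{thm:fri<iota}.

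For the upper bound I would argue by topological obstruction. Suppose $\tanh(r/2) > |z|$, equivalently $r > 2\tanh^{-1}|z|$, and that $B^c_{\mathbb{D}^*}(z;r) \subset f(\mathbb{D})$ for some $f \in \mathcal{U}(\mathbb{D},\mathbb{D}^*)$. Since the Euclidean disk $B^c_{\mathbb{D}}(z;r)$ then contains $0$ in its interior, $B^c_{\mathbb{D}^*}(z;r)$ contains a small circle $\gamma$ encircling the origin once, which represents a generator of $\pi_1(\mathbb{D}^*)$. On the other hand $f(\mathbb{D})$ is simply connected (as $f$ is a biholomorphism onto its image), so $\gamma$ would be null-homotopic in $f(\mathbb{D}) \subset \mathbb{D}^*$. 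This is a contradiction, so $H^c_{\mathbb{D}^*}(z) \leq |z|$ and the result follows.

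The main, and fairly mild, obstacle is the initial Carath\'{e}odory identification; once we know $c_{\mathbb{D}^*} = \rho_{\mathbb{D}}|_{\mathbb{D}^* \times \mathbb{D}^*}$, the Carath\'{e}odory balls become completely explicit and the rest of the argument reduces to essentially the same homotopy-lifting idea already employed in the proof of Theorem \ref{thm:fri<iota}.
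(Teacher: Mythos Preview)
Your proof is correct and follows essentially the same route as the paper: both identify $c_{\mathbb{D}^*}=\rho_{\mathbb{D}}$ via the removable singularity theorem, read off the Carath\'eodory balls as (possibly punctured) Euclidean disks, and then pin down the critical radius $2\tanh^{-1}|z|$. If anything, your write-up is more explicit than the paper's: the paper simply notes that $B^c_{\mathbb{D}^*}(z;2\tanh^{-1}|z|)$ is simply connected and then says ``the result follows'', whereas you separate the lower bound (via part~1 of Theorem~\ref{thm:fri<iota}) from the upper bound and spell out the homotopy obstruction that any larger ball contains a loop generating $\pi_1(\mathbb{D}^*)$, which cannot sit inside a simply connected $f(\mathbb{D})$.
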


\begin{proof}
For any $f \in \mathcal{O}(\mathbb{D}^*,\mathbb{D})$, by the removable singularity theorem it extends to a function $\widetilde{f} \in \mathcal{O}(\mathbb{D},\mathbb{D})$ whereas any $\widetilde{f} \in \mathcal{O}(\mathbb{D},\mathbb{D})$ naturally defines  a function $f \in \mathcal{O}(\mathbb{D}^*,\mathbb{D})$. Thus we have
\[
c_{\mathbb{D}^*}(z_1,z_2)
=
c_{\mathbb{D}}(z_1,z_2).
\]
It also follows from the Schwarz Lemma that the $c_{\mathbb{D}}(z_1,z_2) = \rho_{\mathbb{D}}(z_1,z_2)$. Then for any $z \in \mathbb{D}^*$, we have $B^c_{\mathbb{D}^*}(z;2\tanh^{-1} |z|)$ is the largest possible Carath\'{e}odory ball centered at $z$ lying inside $\mathbb{D}^*$. Assume without loss of generality that $z>0$.
Observe that $z' \notin B^c_{\mathbb{D}^*}(z; 2\tanh^{-1} |z| )$ for any $-1<z'<0$. Consequently, we can conclude that $B^c_{\mathbb{D}^*}(z;2\tanh^{-1} |z|)$ is simply connected and hence the result follows. 
 
\end{proof}

\begin{remark}
Since
\[
|z| \leq \dfrac{\log |z| + \sqrt{(\log |z|)^2+1}}{\pi}
\]
for any $0<|z|<1$ (see the proof for Theorem \ref{thm:quo_Disk}), Theorem \ref{thm:fri_k_disk} and Theorem \ref{thm:fri_c_disk} verify Inequality (\ref{eq:fri_order}) when $X= \mathbb{D}^*$.
\end{remark}

\begin{remark}
Since
\[
S_{\mathbb{D}^*} (z) = |z|
\]
for any $0<|z|<1$, we have  $S_{\mathbb{D}^*} (z)  = H^c_{\mathbb{D}^*}(z)$. In fact, the same argument in Theorem \ref{thm:fri_c_disk} shows that, for $X=\mathbb{D} \setminus \{ p_1, \ldots, p_n \}$, we have
\[
H^c_X (z) =
\min\limits_{i=1,\dots,n}
\left\lbrace
\left| \dfrac{z-p_i}{1-z \overline{p_i}} \right|
\right\rbrace
\]
and hence $H^c_X (z) = S_X (z)$ for $X=\mathbb{D} \setminus \{ p_1, \ldots, p_n \}$.
\end{remark}

\subsection{On the comparison of the Fridman function and squeezing function}

Let $X$ be a bounded domain in $\mathbb{C}^n$. 
In \cite{rong2020comparison}, Rong and Yang introduced the \textit{quotient invariant} $m^d_X (z)= \frac{S_X (z)}{H^d_X (z)}$ for all $z \in X$. Since $S_X$ and $H^d_X$ are biholomorphic invariant, $m^d_X$ is also a biholomorphic invariant. From (\ref{eq:fri_order}), we know that $m^d_X (z) \le 1$ for all $z \in X$ and Rong and Yang asked for which $X$ and $z_0\in X$ one can have $m^d_X (z_0) = 1$. Apply Theorem \ref{thm:fri_c_ar} and its remark and the fact that $S_{A_r} (z) = \max\left\lbrace |z|,\frac{|z|}{r} \right\rbrace$, we have the following theorem which generalizes corollary 6 of \cite{rong2020comparison}. 

\begin{theorem}
\label{thm:quo_Ar}
For $d=k$ or $c$ and for any $z \in A_r$, we have
\[ m^d_{A_r} (z)  < 1 \]
and when $z \to p$ for any $p \in \partial A_r$, we have 
\[ \lim\limits_{z \to p } m^d_{A_r} (z)  = 1. \]
\end{theorem}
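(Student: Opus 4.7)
The plan is to derive both assertions from the explicit formula $S_{A_r}(z) = \max\{|z|, r/|z|\}$ recalled in Section \ref{ch:fri.Sec:cal:ar} together with the general chain $S_{A_r}(z) \le H^c_{A_r}(z) \le H^k_{A_r}(z) \le 1$ provided by (\ref{eq:fri_order}) and the fact that every Fridman function is bounded above by $1$ (it is a $\tanh$ of a nonnegative quantity). Since $H^c_{A_r} \le H^k_{A_r}$, establishing the strict inequality $S_{A_r}(z) < H^c_{A_r}(z)$ immediately yields $m^d_{A_r}(z) < 1$ for both $d=c$ and $d=k$.

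For the strict inequality, I would first exploit the biholomorphic automorphism $\phi: z \mapsto r/z$ of $A_r$, which interchanges $\{|z| \le \sqrt{r}\}$ and $\{|z| \ge \sqrt{r}\}$ and preserves both $S_{A_r}$ and $H^c_{A_r}$; together with a rotation this reduces the problem to the case $z \in [\sqrt{r},1)$, where $S_{A_r}(z) = z$. The proof of Theorem \ref{thm:fri_c_ar} shows that in this range $H^c_{A_r}(z) = \tanh \tfrac{1}{2}c_{A_r}(z,-\sqrt{r})$. Feeding the inclusion $A_r \hookrightarrow \mathbb{D}$, which lies in $\mathcal{O}(A_r,\mathbb{D})$, into the defining supremum of $c_{A_r}$ gives $c_{A_r}(z,-\sqrt{r}) \ge \rho_\mathbb{D}(z,-\sqrt{r})$, and hence $H^c_{A_r}(z) \ge \tfrac{z+\sqrt{r}}{1+z\sqrt{r}}$. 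The desired strict inequality $S_{A_r}(z) < H^c_{A_r}(z)$ then collapses to $z < \tfrac{z+\sqrt{r}}{1+z\sqrt{r}}$, equivalently $z^2 < 1$, which is immediate.

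For the boundary behavior, $\partial A_r$ is the union of the circles $|z|=1$ and $|z|=r$, on either of which $\max\{|z|, r/|z|\} \to 1$; thus $\lim_{z \to p} S_{A_r}(z) = 1$ for every $p \in \partial A_r$. Since $H^d_{A_r}(z) \le 1$, we have $m^d_{A_r}(z) \ge S_{A_r}(z)$, which combined with the bound $m^d_{A_r}(z) \le 1$ from part 1 yields $\lim_{z \to p} m^d_{A_r}(z) = 1$ by a sandwich argument. In particular, neither the explicit formula of Theorem \ref{thm:fri_k_ar} nor the full Schottky--Klein expression of Theorem \ref{thm:fri_c_ar} is required; the nontrivial content is concentrated in the one-line comparison in part 1, which is itself essentially trivial once the correct lower bound for $H^c_{A_r}$ via the inclusion $A_r \hookrightarrow \mathbb{D}$ is isolated. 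Accordingly, there is no genuine obstacle, and the only mild subtlety is recognizing that one should bypass Simha's explicit product formula and instead use only the identification $H^c_{A_r}(z) = \tanh\tfrac{1}{2}c_{A_r}(z,-\sqrt{r})$ coupled with the crude inclusion estimate.
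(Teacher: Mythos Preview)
Your argument is correct, and it is genuinely more elementary than the paper's.  For the strict inequality, the paper works directly with the Schottky--Klein expression in Remark~\ref{rmk:fri_c_ar}: it rewrites $H^c_{A_r}(z)-r/|z|$ as a product involving the conformal map $g(z)=\omega(z,-r^{1/2})/(\sqrt r\,\omega(z,-r^{-1/2}))$ onto a circularly slit disk, and then appeals to the geometry of that slit map to show $g(x)>\sqrt r$ on $(r,1)$.  You instead keep only the identity $H^c_{A_r}(z)=\tanh\tfrac12 c_{A_r}(z,-\sqrt r)$ extracted from the proof of Theorem~\ref{thm:fri_c_ar} and bound $c_{A_r}$ below by $\rho_{\mathbb D}$ via the inclusion $A_r\hookrightarrow\mathbb D$; the inequality then reduces to $z<\frac{z+\sqrt r}{1+z\sqrt r}$, i.e.\ $z^2<1$.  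This bypasses the prime function entirely, at the cost of importing the single qualitative fact (the location of the extremal point $-\sqrt r$) from the proof of Theorem~\ref{thm:fri_c_ar}.

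For the boundary limit the paper simply asserts $\lim_{z\to p}H^d_{A_r}(z)=1=\lim_{z\to p}S_{A_r}(z)$ (which one reads off from Theorems~\ref{thm:fri_k_ar} and~\ref{thm:fri_c_ar}).  Your sandwich $S_{A_r}(z)\le m^d_{A_r}(z)\le 1$, using only $H^d_{A_r}\le 1$ and $S_{A_r}(z)\to 1$, is cleaner and does not require the explicit Fridman formulae at all.
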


\begin{proof}
For $x \in (r,1)$, define
\[
f(x) := 
\dfrac{1}{ r x } \left( \dfrac{\omega \left( x , -r^{1/2} \right)}{ \omega \left( x , -r^{-1/2} \right)} \right)^2
- \dfrac{r}{x}
\]
Then we have 
\begin{align*}
    f(x) 
    &= 
\dfrac{1}{x}
\left[
\left( \dfrac{\omega \left( x , -r^{1/2} \right)}{ \sqrt{r} \omega \left( x , -r^{-1/2} \right)} \right)^2
-r
\right]
\\
    &= 
\dfrac{1}{x}
\left(
 \dfrac{\omega \left( x , -r^{1/2} \right)}{ \sqrt{r} \omega \left( x , -r^{-1/2} \right)} 
- \sqrt{r}
\right)
\left(
 \dfrac{\omega \left( x , -r^{1/2} \right)}{ \sqrt{r} \omega \left( x , -r^{-1/2} \right)} 
+ \sqrt{r}
\right).
\end{align*}
Clearly $\frac{1}{x} >0$. Consider 
\[ 
g(z) := 
 \dfrac{\omega \left( z , -r^{1/2} \right)}{ \sqrt{r} \omega \left( z , -r^{-1/2} \right)}
\]
for $z \in A_r$.
Then $g(z)$ is the conformal map from $A_r$ onto a circularly slit disk $\mathbb{D} \setminus L$, where $L$ is a proper subarc of a circle of radius $\sqrt{r}$ centred at $0$, with $g(-\sqrt{r})=0$ and $g(\partial \mathbb{D}) = \partial \mathbb{D}$ (see Section 5.6 of \cite{book_crowdy2020}). Furthermore, from the infinite product expression (\ref{eq:skpf_ar}) of $\omega ( z, \sqrt{r})$, one can deduce that 
$g(\overline{z}) = \overline{g(z)}$. In particular, it follows that $g(x) \in (-1,\sqrt{r})$ for any $x \in (-1,-r)$ and $g(x) \in (\sqrt{r},1)$ for any $x \in (r,1)$. Thus $g(x)-\sqrt{r}>0$ and $g(x)+\sqrt{r}>0$ for any $x \in (r,1)$. Therefore, $f(x) >0$ and hence by Remark \ref{rmk:fri_c_ar},
\[ H^c_{A_r} (z) > \dfrac{r}{|z|}\]
for any $z \in A_r$. Since $H^c_{A_r} (z) = H^c_{A_r} \left( \frac{r}{z} \right)$, we also have 
\[ H^c_{A_r} (z) > |z|  \]
for any $z \in A_r$.
It follows that
\[ H^c_{A_r} (z) > S_{A_r} (z)  \]
and hence 
\[ m^c_{A_r} (z) < 1  \]
for any $z \in A_r$. By inequality (\ref{eq:fri_order}), we also have 
\[ m^k_{A_r} (z) < 1  \]
for any $z \in A_r$. That 
\[ \lim\limits_{z \to p } m^d_{A_r} (z)  = 1 \]
for any $p \in \partial A_r$ is clear since 
\[ \lim\limits_{z \to p } H^d_{A_r} (z)  = 1 =\lim\limits_{z \to p } S_{A_r} (z) \]
for $d=k$ or $c$.
 
\end{proof}

Theorem \ref{thm:fri_k_disk} and \ref{thm:fri_c_disk} and together with the fact that $S_{\mathbb{D}^*} (z) =|z|$ allow us to obtain the following theorem which generalizes Theorem 6 in \cite{rong2020comparison}.

\begin{theorem}
\label{thm:quo_Disk}
For any $z \in \mathbb{D}^*$, we have
\[ m^k_{\mathbb{D}^*} (z)  < 1 
\qquad
\mbox{and}
\quad
m^c_{\mathbb{D}^*} (z)  = 1.
\]
When $z \to p$ for any $p \in \partial \mathbb{D}$, we have 
\[ \lim\limits_{z \to  p} m^d_{\mathbb{D}^*} (z)  = 1 \]
for $d=k$ or $c$ and when $z \to 0$, we have 
\[ \lim\limits_{z \to  0} m^c_{\mathbb{D}^*} (z)  = 1 
\qquad 
\mbox{and}
\qquad
\lim\limits_{z \to  0} m^k_{\mathbb{D}^*} (z)  = 0 
\]
\end{theorem}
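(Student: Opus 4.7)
The plan is to reduce every assertion to one-variable calculus by substituting the three explicit formulae already at our disposal: $S_{\mathbb{D}^*}(z) = |z|$, $H^c_{\mathbb{D}^*}(z) = |z|$ from Theorem \ref{thm:fri_c_disk}, and $H^k_{\mathbb{D}^*}(z) = (\log|z| + \sqrt{(\log|z|)^2+\pi^2})/\pi$ from Theorem \ref{thm:fri_k_disk}. All three depend only on $|z|$, so I parametrize everything by $u := -\log|z| \in (0,\infty)$. The $c$-statements are then immediate: $m^c_{\mathbb{D}^*}(z) = |z|/|z| \equiv 1$, which supplies both the identity and the limits as $z \to p \in \partial\mathbb{D}$ and as $z \to 0$ at no further cost.

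For the $k$-statements, I first rationalize to rewrite
\[
H^k_{\mathbb{D}^*}(z) = \frac{-u + \sqrt{u^2+\pi^2}}{\pi} = \frac{\pi}{\sqrt{u^2+\pi^2} + u},
\]
so that $m^k_{\mathbb{D}^*}(z) = \pi^{-1} e^{-u}\bigl(\sqrt{u^2+\pi^2}+u\bigr)$. The strict inequality $m^k_{\mathbb{D}^*}(z) < 1$ on $\mathbb{D}^*$ is then equivalent to $g(u) := \pi e^u - \sqrt{u^2+\pi^2} - u > 0$ for all $u > 0$. One checks $g(0) = 0$, and since $u/\sqrt{u^2+\pi^2} < 1$, the derivative satisfies $g'(u) = \pi e^u - u/\sqrt{u^2+\pi^2} - 1 > \pi e^u - 2 > 0$ for every $u \geq 0$. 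Thus $g$ is strictly increasing on $[0,\infty)$ and strict positivity on $(0,\infty)$ follows.

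The two $k$-limits drop out of the same reformulation. As $z \to p \in \partial\mathbb{D}$ we have $u \to 0^+$, so $H^k_{\mathbb{D}^*} \to 1$ and $S_{\mathbb{D}^*} \to 1$, hence $m^k_{\mathbb{D}^*} \to 1$. As $z \to 0$ we have $u \to \infty$, so $H^k_{\mathbb{D}^*} \sim \pi/(2u)$ while $S_{\mathbb{D}^*} = e^{-u}$, which yields $m^k_{\mathbb{D}^*} \sim (2u/\pi)\,e^{-u} \to 0$. The only place that requires even a short argument is the strict inequality $g > 0$ on $(0,\infty)$; the one-line derivative bound above dispatches it, and everything else is substitution and a standard asymptotic expansion, so I expect no real obstacle.
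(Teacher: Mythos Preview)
Your proposal is correct and follows essentially the same route as the paper: substitute the explicit formulae for $S_{\mathbb{D}^*}$, $H^c_{\mathbb{D}^*}$, and $H^k_{\mathbb{D}^*}$ and reduce everything to elementary one-variable calculus in the parameter $-\log|z|$. The only cosmetic difference is that the paper establishes the strict inequality $H^k_{\mathbb{D}^*}>S_{\mathbb{D}^*}$ via an auxiliary function that is decreasing with limit $0$ (and handles the limit at the puncture by L'H\^opital), whereas you use an increasing auxiliary function vanishing at the origin together with a direct asymptotic---both arguments are equally valid.
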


\begin{proof}
By Theorem \ref{thm:fri_k_disk}, we have 
\[     H^k_{\mathbb{D}^*} (z) = 
    \dfrac{\log |z| + \sqrt{ (\log|z|)^2 + \pi^2} }{\pi}. \]
    Consider the function 
    \[ 
    f(t) = \pi e^{-t} - t + \sqrt{t^2 +\pi^2}  
\]
for $t \in [0,\infty)$. 
Since 
\[ f' (t) = - \pi e^{-t} - 1 + \dfrac{t}{\sqrt{t^2 +\pi^2}}, \]
we have $f'(t) <0$ for all $t \in (0,\infty)$ and hence $f(t)$ is decreasing for all $t \in (0,\infty)$. Since
\[ 
\lim\limits_{t \to \infty} f(t) 
=
\lim\limits_{t \to \infty} 
\left( 
\pi e^{-t} + \dfrac{\pi^2 }{t+\sqrt{t^2 +\pi^2}}
\right)
= 0,
 \]
we have $f(t) > 0$ for all $t \in [0,\infty).$ Putting $t=-\log |z|$, we have 
\[ H^k_{\mathbb{D}^*} (z) > H^c_{\mathbb{D}^*} (z) =S_{\mathbb{D}^*} (z) \]
and hence 
\[ m^k_{\mathbb{D}^*} (z)  < 1 
\qquad
\mbox{and}
\quad
m^c_{\mathbb{D}^*} (z)  = 1.
\]
Also, 
for any $p \in \partial \mathbb{D}$, 
we have \[ \lim\limits_{z \to  p} m^d_{\mathbb{D}^*} (z)  = 1 \]
for $d=k$ or $c$ because 
\[ 
\lim\limits_{z \to p} H^d_{\mathbb{D}^*} (z)  = 1 = \lim\limits_{z \to  p} S_{\mathbb{D}^*} (z) ,
\]
That $\lim\limits_{z \to  0} m^c_{\mathbb{D}^*} (z)  = 1$ is straightforward. 
Finally, using de L'h\^ospital rule, we have 
\begin{align*}
    \lim\limits_{z \to  0} m^k_{\mathbb{D}^*} (z)
    &=
    \lim\limits_{z \to  0} 
    \dfrac{\pi |z|}{\log |z| + \sqrt{ (\log |z| )^2 + \pi ^2 }}
    \\
    &=
    \lim\limits_{t \to  \infty} 
    \dfrac{\pi e^{-t}}{ -t + \sqrt{ t^2 + \pi ^2 }}
    \\
    &=
    \lim\limits_{t \to  \infty} 
    \dfrac{\pi e^{-t} (t + \sqrt{ t^2 + \pi ^2 }) }{ (t^2 + \pi^2) - t^2}
    \\
    &=
    \lim\limits_{t \to  \infty} 
    \dfrac{t + \sqrt{ t^2 + \pi ^2 } }{\pi e^{t}  }
    \\
    &=
    \lim\limits_{t \to  \infty} 
    \dfrac{ 1 + \frac{t}{\sqrt{ t^2 + \pi ^2 }}}{\pi e^{t}}
    \\
    &= 0.
\end{align*}
The result follows.
 
\end{proof}

\section{Boundary behavior of the Fridman function}
\label{sect:boundary_growth}

We will prove Theorem \ref{thm:fri:boundary_grow} in this section.

\begin{proof}
Since $X$ is a hyperbolic Riemann surface, by the Uniformization Theorem, $X$ is biholomorphic to $\mathbb{D} \diagup \Gamma$ where $\Gamma$ is a torsion-free discrete group of isometries of $\mathbb{D}$ (cf. Corollary 1.1.49 of \cite{aba}). Because the Fridman function is a biholomorphic invariant, we can assume without loss of generality that $X=\mathbb{D} \diagup \Gamma$ and hence Theorem \ref{thm:fri:main} applies. Let $\pi:D \to X$ be the quotient map. 

We first work on part 1 of the Theorem \ref{thm:fri:boundary_grow}. Since $X$ is of regular type and $\sigma$ has only one point $p$, then by Theorem 1.1.56  of \cite{aba}, there exists a point $q \in \partial \mathbb{D}$, which is a fixed point of some parabolic element $\widetilde{\gamma}$ in $\Gamma$, such that $\pi (q) =p$. Also by Theorem 1.1.56  of \cite{aba}, for any sequence $\{z_n\}$ in $X$ converges to $p$, there exists a sequence $\{ w_n \}$ in $\mathbb{D}$ with $\pi (w_n) = z_n$ for all $n$ such that the sequence $\{ w_n \}$ converges to $q$ non-tangentially (see for example p.428 of \cite{book:garnett} for definition of non-tangential limit). By Theorem \ref{thm:fri:main}, we have 
\begin{equation*}
    H^k_X (z) = \min\limits_{\gamma \in \Gamma \setminus \{ \mathrm{Id} \} } \tanh \frac{ \rho_\mathbb{D} (w,\gamma (w) )}{4}.
\end{equation*} Using the following hyperbolic trigonometry identities,
\[
\tanh \frac{x}{2} = \frac{\sinh x}{\cosh x +1}
\qquad
\mbox{and}
\qquad
\cosh^2 x  - \sinh^2 x =1,
\]
we have 
\begin{equation*}
    H^k_X (z) = \min\limits_{\gamma \in \Gamma \setminus \{ \mathrm{Id} \} }
   Q_\gamma (w)
\end{equation*} 
where \begin{equation*}
    Q_\gamma (w) = 
    \dfrac{\sinh \frac{1}{2}\rho_\mathbb{D} (w,\gamma (w) )}{1+\sqrt{1+ \sinh^2 \frac{1}{2}\rho_\mathbb{D} (w,\gamma (w) ) }} .
\end{equation*} 
Denote 
\begin{equation*}
    P(w,q)
    = 
\dfrac{1-|w|^2}{|w-q|^2}
\end{equation*}
the Poisson kernel on $\mathbb{D}$. If $\gamma \in \Gamma \setminus \{ \mathrm{Id} \}$ is parabolic, Theorem 7.35.1 in \cite{book:ddg} states that 
\begin{equation*}
\sinh \dfrac{1}{2} \rho_\mathbb{D} (w , \gamma(w))  
    = \dfrac{c_\gamma | w-t_\gamma|^2}{1-|w|^2}
\end{equation*}
where $c_\gamma$ is a constant depending on $\gamma$ and $t_\gamma \in \partial \mathbb{D}$ is the fixed point of $\gamma$.
In particular, for the parabolic element $\widetilde{\gamma} \in \Gamma$ which fixes $q$, we have 
\begin{equation*}
\sinh \dfrac{1}{2} \rho_\mathbb{D} (w , \widetilde{\gamma} (w))  
    = \dfrac{c_{\widetilde{\gamma}} | w-q |^2}{1-|w|^2}
\end{equation*}
When $w \to q $ non-tangentially,  $\frac{ |w- q|}{1-|w|}$ is bounded by definition (see p.428 of \cite{book:garnett}) and hence 
\[
\sinh \frac{1}{2} \rho_\mathbb{D} (w , \widetilde{\gamma} (w))   \leq k_{\widetilde{\gamma}} | w- q |
\]
for some constant $ k_{\widetilde{\gamma}} $. Therefore, when $w \to q $ non-tangentially,  $\sinh \frac{1}{2} \rho_\mathbb{D} (w , \widetilde{\gamma} (w)) \to 0$ and thus $Q_{\widetilde{\gamma}} (w) \to 0$. It follows that 
  \[ \lim\limits_{z \to p} H^k_X (z) =0. \]
Finally, as $S_X$ and $H^c_X$ are non-negative, by inequality (\ref{eq:fri_order}), we have 
  \[ \lim\limits_{z \to p} S_X (z)=\lim\limits_{z \to p} H^c_X (z) =0. \]
This proves part 1.
  
We now prove part 2. Since $X$ is of regular type and $\sigma$ has more than one point, then by Theorem 1.1.57  of \cite{aba}, there exists an open arc $\Sigma \subset \partial \mathbb{D}$ such that $\pi$ extends continuously to $\Sigma $ with $\pi (\Sigma )=\sigma $ and $\Gamma$ is properly discontinuous at every point of $\Sigma $. Then we can find a point $q \in \Sigma $ such that $\pi (q)=p$. For any sequence $\{z_n\}$ in $X$ convergent to $p$, we can find a sequence $\{w_n\}$ in $\mathbb{D}$ convergent to $q$ such that $\pi ( w_n) =z_n$ for all $n$. So
\begin{equation*}
   \lim\limits_{z \to p} H^k_X (z) = \lim\limits_{w \to q} \min\limits_{\gamma \in \Gamma \setminus \{ \mathrm{Id} \} } 
    Q_\gamma (w).
\end{equation*}
Let $\gamma$ be an element in $\Gamma \setminus \{ \mathrm{Id} \}$. Since $\Gamma$ is torsion-free, $\gamma$ is of infinite order. Also, $\gamma$ is not elliptic or otherwise $\Gamma$ is not discontinuous by Proposition 5.1.3 of \cite{book:hyper_geo}. Hence, 
$\gamma$ is either hyperbolic or parabolic.

If $\gamma$ is hyperbolic, then by Theorem 7.35.1 in \cite{book:ddg} we have
\begin{equation*}
     \sinh \dfrac{1}{2} \rho_{\mathbb{D}} (w , \gamma(w))
    = \cosh \rho_\mathbb{D} (w,A_\gamma) \sinh T_\gamma
\end{equation*}
where $A_\gamma$ is the axis of $\gamma$ and $T_\gamma$ is half of the translation length (which is a constant depending on $\gamma$). Let $a_\gamma \in A_\gamma  \subset \mathbb{D}$ be a point such that $\rho_{\mathbb{D}} (w , A_\gamma ) = \rho_{\mathbb{D}} ( w , a_\gamma )$. Notice  that $|a_\gamma| \neq 1$. Then we have
\begin{align*}
    \cosh \rho_\mathbb{D} (w ,a_\gamma) 
    & = 1 + \dfrac{2 |w-a_\gamma| ^2}{(1-|w|^2)(1-|a_\gamma|^2)}
    \\
    &=
    1 + \dfrac{k_{w,\gamma}}{1-|w|}
\end{align*}
where $ k_{w,\gamma} = \frac{2 |w-a_\gamma| ^2}{(1+|w|)(1-|a_\gamma|^2)} > c$ for some positive constant $c$ when $w$ is sufficiently close to $\partial \mathbb{D}$ as $a_\gamma \notin \partial \mathbb{D}$.
When $w \to q$, we have $\cosh \rho_\mathbb{D} (w ,a_\gamma) \to \infty$ and hence $     \sinh \frac{1}{2} \rho_{\mathbb{D}} (w , \gamma(w)) \to \infty$. Hence,  
\begin{align*}
\lim\limits_{w \to q} Q_\gamma (w) &=  \lim\limits_{w \to q} \dfrac{\sinh \frac{1}{2}\rho_\mathbb{D} (w,\gamma (w) )}{1+\sqrt{1+ \sinh^2 \frac{1}{2}\rho_\mathbb{D} (w,\gamma (w) ) }} \\
&=\lim\limits_{w \to q}
 \dfrac{1}{\left( \sinh \frac{1}{2}\rho_\mathbb{D} (w,\gamma (w) ) \right)^{-1}+\sqrt{\left( \sinh \frac{1}{2}\rho_\mathbb{D} (w,\gamma (w) ) \right)^{-2}+ 1 }} 
 \\
 &= 1.
\end{align*}
If $\gamma$ is parabolic, Theorem 7.35.1 in \cite{book:ddg} states that 
\begin{equation*}
\sinh \dfrac{1}{2} \rho_\mathbb{D} (w , \gamma(w))  
    = \dfrac{c_\gamma | w-t_\gamma|^2}{1-|w|^2}
\end{equation*}
where $c_\gamma$ is a constant depending on $\gamma$ and $t_\gamma \in \partial \mathbb{D}$ is the fixed point of $\gamma$. Since $l$ contains more than one point, we have $q$ is not a parabolic fixed point, i.e, $t_\gamma \neq q$ (see for example Proposition 1.1.58 of Abate \cite{aba}). Then $\sinh \dfrac{1}{2} \rho (w , \gamma(w)) \to \infty$ as $w \to q$. Hence,  
\begin{align*}
\lim\limits_{w \to q} Q_\gamma (w) &=  \lim\limits_{w \to q} \dfrac{\sinh \frac{1}{2}\rho_\mathbb{D} (w,\gamma (w) )}{1+\sqrt{1+ \sinh^2 \frac{1}{2}\rho_\mathbb{D} (w,\gamma (w) ) }} \\
&=\lim\limits_{w \to q}
 \dfrac{1}{\left( \sinh \frac{1}{2}\rho_\mathbb{D} (w,\gamma (w) ) \right)^{-1}+\sqrt{\left( \sinh \frac{1}{2}\rho_\mathbb{D} (w,\gamma (w) ) \right)^{-2}+ 1 }} 
 \\
 &= 1.
\end{align*}
In any cases, $\lim\limits_{w \to q} Q_\gamma (w) =1$. Therefore,
\[ \lim\limits_{z \to p} H^k_X (z) =1. \]
This proves part 2.
 
\end{proof}

\section{Appendix : alternative definition for the squeezing function }
\label{appendix:alt_def_squeezing}

Let $X \subset \mathbb{C}^n$ be a bounded domain. In 2012, Deng, Guan and Zhang \cite{squ_def1} defined the squeezing function $S_X (z)$ to be 
\begin{equation*}
    S_X (z) 
:= 
\sup
\left\lbrace 
\frac{a}{b} 
\: : \:
\mathbb{B}^n (0;a) \subset  f(\Omega ) \subset \mathbb{B}^n(0;b),
f\in \mathcal{U}(X,\mathbb{C}^n), f(z)=0
\right\rbrace
\end{equation*}
for each $z \in X$. The following lemma gives a reformulation of $S_X (z)$.

\begin{lemma}
\label{lem:sq:reformulate}
Define
\begin{equation*}
    \widehat{S}_X (z) 
= 
\sup
\left\lbrace 
\tanh \frac{r}{2}
\: : \:
B^k_{\mathbb{B}^n} (f(z);r) \subset  f(X )
, f \in \mathcal{U} (X, \mathbb{B}^n )
\right\rbrace.
\end{equation*}
Then $S_X (z) = \widehat{S}_X (z)  $.
\end{lemma}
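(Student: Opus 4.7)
The strategy is to prove $S_X(z) \le \widehat S_X(z)$ and its reverse separately by exhibiting an explicit correspondence between the admissible maps in the two definitions. The key ingredient is the identity
\[
\mathbb{B}^n\bigl(0;\tanh(r/2)\bigr) = B^k_{\mathbb{B}^n}(0;r) \qquad \text{for every } r>0,
\]
which is a direct consequence of the formula $k_{\mathbb{B}^n}(0,w)=2\tanh^{-1}|w|$. This lets one freely translate between Euclidean and Kobayashi balls \emph{centred at the origin}; the two natural operations that move the relevant centre to $0$, namely rescaling by $1/b$ and post-composing with an automorphism of $\mathbb{B}^n$, both preserve injectivity and are Kobayashi isometries (up to a conformal factor that matters only for the first) so that the inclusions of interest transform predictably.

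For $S_X(z) \le \widehat S_X(z)$, I would take any admissible map $f\in\mathcal{U}(X,\mathbb{C}^n)$ with $f(z)=0$ and $\mathbb{B}^n(0;a)\subset f(X)\subset\mathbb{B}^n(0;b)$, and rescale to $\tilde f := f/b$. Then $\tilde f\in\mathcal{U}(X,\mathbb{B}^n)$, still $\tilde f(z)=0$, and $\mathbb{B}^n(0;a/b)\subset\tilde f(X)$. Setting $r:=2\tanh^{-1}(a/b)$, the key identity rewrites this as $B^k_{\mathbb{B}^n}(\tilde f(z);r)\subset\tilde f(X)$ with $\tanh(r/2)=a/b$, so $a/b$ is admissible in the supremum defining $\widehat S_X(z)$; taking suprema finishes this direction.

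For the reverse inequality $\widehat S_X(z)\le S_X(z)$, I would take any $f\in\mathcal{U}(X,\mathbb{B}^n)$ and $r>0$ with $B^k_{\mathbb{B}^n}(f(z);r)\subset f(X)$, use the homogeneity of $\mathbb{B}^n$ to choose $\phi\in\mathrm{Aut}(\mathbb{B}^n)$ with $\phi(f(z))=0$, and set $g:=\phi\circ f\in\mathcal{U}(X,\mathbb{C}^n)$. Since $\phi$ is a biholomorphism of $\mathbb{B}^n$ and hence a $k$-isometry, it sends $B^k_{\mathbb{B}^n}(f(z);r)$ bijectively onto $B^k_{\mathbb{B}^n}(0;r)=\mathbb{B}^n(0;\tanh(r/2))$, so the inclusion becomes $\mathbb{B}^n(0;\tanh(r/2))\subset g(X)\subset\mathbb{B}^n(0;1)$. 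Thus $g$ is admissible in the original definition of $S_X(z)$ with ratio $\tanh(r/2)/1=\tanh(r/2)$, and taking suprema yields the desired inequality.

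No serious obstacle is expected. The only points worth verifying carefully are that rescaling and automorphisms really do preserve injectivity (immediate, as both are biholomorphisms) and that the Kobayashi ball of $\mathbb{B}^n$ at the origin coincides with the stated Euclidean ball (a direct computation from the explicit formula for $k_{\mathbb{B}^n}(0,\cdot)$). Combining the two inequalities establishes $S_X(z)=\widehat S_X(z)$.
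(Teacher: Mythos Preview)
Your proposal is correct and follows essentially the same approach as the paper's proof: both use the rescaling $f\mapsto f/b$ to pass to maps into $\mathbb{B}^n$, the identity $\mathbb{B}^n(0;\tanh(r/2))=B^k_{\mathbb{B}^n}(0;r)$, and an automorphism of $\mathbb{B}^n$ sending $f(z)$ to $0$ (acting as a Kobayashi isometry) to remove the normalization. The only cosmetic difference is that you split the argument into two inequalities, whereas the paper presents it as a chain of reformulations of the supremum.
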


\begin{proof}
Fix any $f \in \mathcal{U}(X,\mathbb{C}^n)$ such that $f(z)=0$. There exists constant $a,b$ such that $\mathbb{B}^n (0;a) \subset  f(\Omega ) \subset \mathbb{B}^n(0;b)$. Define $g(z):=\frac{f(z)}{b}$. Notice that $g \in \mathcal{U}(X,\mathbb{B}^n)$ and $g(z)=0$. 
It follows that 
\begin{align*}
    S_X (z) 
&=
\sup
\left\lbrace 
\frac{a}{b} 
\: : \:
\mathbb{B}^n \left( 0; \frac{a}{b} \right) \subset  f(X ) \subset \mathbb{B}^n,
f \in \mathcal{U}(X,\mathbb{C}^n), f(z)=0
\right\rbrace
\\
&=
\sup
\left\lbrace 
a
\: : \:
\mathbb{B}^n (0; a) \subset  g(X) ,
g \in \mathcal{U}(X,\mathbb{B}^n), g(z)=0
\right\rbrace.
\end{align*}
Note that for any $r$ such that $0<r<1$, we have $ \mathbb{B}^n \left(0 ; \tanh \frac{r}{2} \right)
    =
    B^k_{\mathbb{B}^n} (0; r)$.
It follows that
\begin{equation*}
  S_X (z)
    =
\sup
\left\lbrace 
\tanh \frac{r}{2}
\: : \:
B^k_{\mathbb{B}^n} (0; r) \subset  g(X ) ,
g \in \mathcal{U}(X,\mathbb{B}^n), g(z)=0
\right\rbrace.
\end{equation*}

Now for any $F \in \mathcal{U}(X, \mathbb{B}^n$), there exists an automorphism $\phi:\mathbb{B}^n \to \mathbb{B}^n$ of $\mathbb{B}^n$ such that 
$\phi(F(z))=0$ (see, for instance, Theorem 2.2.2. of \cite{book:rudin_ball}).
Applying the contraction property of Kobayashi metric to $\phi$ as well as $\phi^{-1}$, we have $\phi$ is an isometry. It follows that
\begin{equation*}
    \phi \left( B^k_{\mathbb{B}^n} (F(z);r) \right) = B^k_{\mathbb{B}^n} ( 0;r).
\end{equation*}
Defining $g=\phi \circ F$, we have 
$S_X (z) = \widehat{S}_X (z)  $.
 
\end{proof}

\begin{remark}
One can replace $B^k_{\mathbb{B}^n} (F(z);r)$ by $B^c_{\mathbb{B}^n} (F(z);r)$, the reformulation still works. 
\end{remark}

\bigskip
\paragraph{Acknowledgments:}
The first author was partially supported by the RGC grant 17306019. The second author was partially supported by a HKU studentship and the RGC grant 17306019. 

\bibliographystyle{spmpsci}   
\bibliography{reference}
\label{pg:reference}
\end{document}